\newtheorem{thm}{Theorem}[section]
 \newtheorem{lem}[thm]{Lemma}
\newtheorem{cor}[thm]{Corollary}
\theoremstyle{definition}
\newtheorem{example}[thm]{Example}
\theoremstyle{remark}
\newtheorem{remark}[thm]{Remark}
\numberwithin{equation}{section}
\newcommand{\R}{{\Bbb R}}
\newcommand{\N}{{\Bbb N}}
\journal{Mathematical Analysis and Applications}
\begin{document}
\begin{frontmatter}

\title{On existence of semi-wavefronts for a non-local  reaction-diffusion equations with distributed time  delay}
\author{ Maitere Aguerrea, Carlos G\'omez}
\ead{maguerrea@ucm.cl, cgomez@ucm.cl}
\address{ Facultad de Ciencias B\'asicas, Universidad Cat\'olica del Maule,Casilla 617,
Talca, Chile.}  

\begin{abstract}
\noindent We  establish  the existence  of  semi-wavefronts solutions for  a non-local delayed reaction-diffusion
equation with   monostable nonlinearity.   The existence result is proved for all speeds $c\geq c_\star$, where the determination  of  $c_\star$ is similar to the calculation of the minimal speed of propagation. The results are applied to some   non-local reaction-diffusion epidemic and population models  with distributed time  delay.

 \end{abstract}

\begin{keyword}
time-delayed reaction-diffusion equation; positive wavefront; non-local interaction; minimal wave; semi-wavefront; existence.\\
\end{keyword}
\end{frontmatter}

\section{Introduction.}\label{into}
The main object of study in this paper is  the non-local reaction-diffusion equation
\begin{equation}\label{i1}
u_t(t,x) =  u_{xx}(t,x)- f(u(t,x)) +  \int_0^\infty\int_{\R}K(s,w)g(u(t-s,x-w))dwds,\end{equation}
where  the time $t\geq 0$, $x\in \R$, it is assumed that the non-negative averaging kernel $K$  satisfies the typical condition 
\begin{description}  
\item[\textbf{$H_0$:}]  $K\in L^1(\R_+\times\R)$ and  $\int_0^\infty\int_{\R}K(s,w)dwds=1$. Moreover, for any $c\in \R$, there exists some $\gamma^\#:=\gamma^\#(c)\in (0,+\infty]$ such that  $\int_0^\infty \int_{\R}K(s,w)e^{-z\left(cs+w\right)}dwds<\infty$ for each $z \in [0,\gamma^\#)$ and diverges, if $z>\gamma^\#$.
\end{description}  
Equation (\ref{i1}), with appropriate  $f,g$ and $K$,  is often used to model  ecological and biological processes where the typical interpretation of $u(t,x)$  is the population density of mature species. The last investigations shows that asymmetric kernels can be present in the growth dynamics of single-species population (see, e.g. \cite{Brit,fz,FWZ,gpt,gouk,gouS,gouss,lrw,MSo,OG,swz,tz}). For example, the type of asymmetry can occur in the population marine models when the population juveniles  move by advection as well as diffusion, but the population adults move by diffusion alone (see \cite[Applications]{gpt}).  In this way, we are interested in  equation (\ref{i1}) when  $K$  is asymmetric.

We also assume the following conditions on the monostable  nonlinearity $g$ and the function $f$:
 \begin{description}
 \item[\bf{ $H_1$:}] $g \in C(\R_+,\R_+)$ is  such that $g(0)=0$,  $g(s)>0$ for all $s>0$, and differentiable at 0. 
\item[\bf{ $H_2$:}] $f \in C^1(\R_+,\R_+)$ is strictly increasing with $f(0)=0$, $0<f'(0)<g'(0)$  and  $f(+\infty)>\sup_{s\geq 0} g(s)$.
\end{description}
In this work we will study the existence of semi-wavefront solutions of equation  (\ref{i1}), i.e.  bounded positive continuous non-constant waves  $u(t,x) = \phi(x +ct)$, \,\, propagating with speed $c$,   and  satisfying the boundary condition  $\phi(-\infty)=0$. An important special case of  semi-wavefront is a wavefront, i.e.  positive classical solution $u(t,x) = \phi(x +ct)$ satisfying $\phi(-\infty) = 0$ and $\phi(+\infty) = \kappa$. During the last time, the existence  of  semi-wavefront  or  wavefront solutions for equation  (\ref{i1}) have been  investigated in several papers assuming different conditions on $f$, $K$ and $g$ (see e.g. (see, \cite{map,ft,fhw,gpt,LiWu,ma1,tat,wlr2,wang,xw,ycw}). However, a few works have considered the existence problem for the general non-local reaction-diffusion equation with distributed delay (\ref{i1}) (see \cite{fz,tz,xw}). In any case, for the  non-local reaction-diffusion equation with distributed delay (\ref{i1}), the existence problem of the semi-wavefront is still unsolved in the general case when $K$ is asymmetric. In this paper we apply the techniques of \cite{gpt} to  present a solution of this open problem,  weakening  or  removing some typical restrictions on  nonlinearities. 

Now, we present our   main results:

\begin{thm} \label{2}Assume that {\bf{ $H_0$}}-{\bf{$H_2$}} hold.   If there exists $L\geq g'(0)$ such $g(s)\leq Ls$ for all $s\geq 0$, then there exists $c_\star\in \R$ such that   the equation (\ref{i1}) has a semi-wavefront solution  $u(x,t)=\phi(x+ct)$ propagating  with speed  $c\geq c_\star$. Furthermore,  if equation $f(s)=g(s)$ has only two solutions: $0$ and $\kappa$,  with $\kappa$ being globally attracting with respect to $f^{-1}\circ g$, then  the equation (\ref{i2}) has at least one wavefront $u(x,t)=\phi(x+ct)$ propagating with speed $c\geq c_\star$ such that $\phi(+\infty)=\kappa$. 

\end{thm}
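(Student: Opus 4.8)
The plan is to look for the profile of the semi-wavefront, $u(t,x)=\phi(z)$ with $z=x+ct$, which turns (\ref{i1}) into the nonlocal profile equation
\[
\phi''(z)-c\phi'(z)-f(\phi(z))+(\mathcal{N}_c\phi)(z)=0,\qquad (\mathcal{N}_c\phi)(z):=\int_0^\infty\!\!\int_{\R}K(s,w)\,g\big(\phi(z-cs-w)\big)\,dw\,ds,
\]
to be solved together with $\phi(-\infty)=0$ and $\phi$ bounded, positive, non-constant. First I would record the structural ingredients coming from $H_0$: the push-forward of $K(s,w)\,dw\,ds$ under $(s,w)\mapsto cs+w$ is a probability measure $K_c$ on $\R$, its bilateral Laplace transform $\mathcal{K}(c,\lambda):=\int_0^\infty\!\!\int_{\R}K(s,w)e^{-\lambda(cs+w)}dwds$ is finite and convex on $[0,\gamma^\#(c))$, and one has the tail bound $\int_{\{cs+w<N\}}K\,dw\,ds\le e^{\gamma N}\mathcal{K}(c,\gamma)$ for every $\gamma\in[0,\gamma^\#(c))$. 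Linearising (\ref{i1}) at $0$ and using $g(s)\le Ls$ to replace $g'(0)$ by $L$, the relevant characteristic function is $\chi_c(\lambda):=\lambda^2-c\lambda-f'(0)+L\,\mathcal{K}(c,\lambda)$, which is convex on $[0,\gamma^\#(c))$ with $\chi_c(0)=L-f'(0)>0$. Since $-c\lambda$ drives $\chi_c$ below $0$ for $c$ large while $\chi_c>0$ on $[0,\gamma^\#)$ for $c\ll0$, the set $\{c:\min_{\lambda\ge0}\chi_c(\lambda)\le0\}$ is a half-line; I would \emph{define} $c_\star$ as its infimum and check $c_\star\in\R$. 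For $c\ge c_\star$ there are $0<\lambda_1\le\lambda_2<\gamma^\#(c)$ with $\chi_c\le0$ on $[\lambda_1,\lambda_2]$; these are the admissible exponential rates at $-\infty$.

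Then I would run the fixed-point argument. Fix $\rho>0$ with $f(\rho)>\sup_{s\ge0}g(s)$ (possible by $H_2$); a maximum-principle comparison then shows every bounded nonnegative solution with $\phi(-\infty)=0$ satisfies $\phi\le\rho$ (at a would-be maximum value $>\rho$ one would get $0\le-f(\rho)+\sup g<0$). For the super-solution I would take $\overline\phi(z)=\min\{e^{\lambda_1 z},\rho\}$: the non-local term is dominated via $(\mathcal N_c\phi)(z)\le L\int K(s,w)\phi(z-cs-w)\,dw\,ds$, so that, splitting the convolution at the corner of $\overline\phi$ and absorbing the flat contribution with the tail bound above, the super-solution inequality reduces on the exponential part to $\chi_c(\lambda_1)\le0$ and on the flat part to $f(\rho)>\sup g$. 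For the sub-solution I would take $\underline\phi(z)=\max\{0,\,e^{\tilde\mu z}-Me^{\nu z}\}$ of the usual two-exponential form, the exponents governed by the linearisation of $\mathcal N_c$ at $0$ via $g'(0)$ --- legitimate because $M$ large confines the support of $\underline\phi$ to a region where $\phi\le\overline\phi$ is already exponentially small, so that $g(\phi)\approx g'(0)\phi$ there --- calibrated so that $0\le\underline\phi\le\overline\phi$. I then recast the profile equation as $\phi=\mathcal A_c\mathcal F[\phi]$, with $\mathcal F[\phi]:=\beta\phi-f(\phi)+\mathcal N_c\phi$ for $\beta>\max_{[0,\rho]}f'$ (so $s\mapsto\beta s-f(s)$ is increasing on $[0,\rho]$) and $\mathcal A_c$ the convolution with the strictly positive Green kernel of $v\mapsto v''-cv'-\beta v$, and on the closed convex set $\mathcal{S}=\{\phi\in C(\R):\underline\phi\le\phi\le\overline\phi\}$ (with the compact-open topology) I check that $\mathcal A_c\mathcal F$ is continuous, compact (it smooths; derivatives are locally equi-bounded by bootstrapping through the equation) and maps $\mathcal S$ into itself. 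Schauder--Tychonoff gives a fixed point $\phi$; by elliptic regularity it is a classical solution, positive, bounded, non-constant, with $\phi(-\infty)=0$ --- a semi-wavefront for every $c>c_\star$. For the endpoint $c=c_\star$ I would take $c_n\downarrow c_\star$, translate the profiles so they attain a fixed positive level at the origin, and extract via Arzel\`a--Ascoli and a diagonal argument a limit solving the $c_\star$-profile equation, still non-trivial by the normalisation and the barriers.

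For the second statement I would start from a semi-wavefront $\phi$ of speed $c\ge c_\star$ and show $\phi(+\infty)=\kappa$. Boundedness of $\phi$ gives, by bootstrapping, boundedness of $\phi,\phi',\phi''$, and a standard persistence argument for monostable kernels yields $\ell^-:=\liminf_{z\to+\infty}\phi>0$; put $\ell^+:=\limsup_{z\to+\infty}\phi$. Along sequences $z_n\to+\infty$ with $\phi(z_n)\to\ell^+$ provided by a fluctuation lemma (so $\phi'(z_n)\to0$ and $\limsup_n\phi''(z_n)\le0$), one pushes the $\limsup$ through the convolution --- using that $K_c$ is a probability measure with $L^1$-tail decay --- to obtain $f(\ell^+)\le\max_{[\ell^-,\ell^+]}g$; symmetrically, using near-minima, $f(\ell^-)\ge\min_{[\ell^-,\ell^+]}g$. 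Since $f$ is an increasing homeomorphism, these say $[\ell^-,\ell^+]\subseteq(f^{-1}\!\circ g)([\ell^-,\ell^+])$; as the only fixed points of $f^{-1}\!\circ g$ are $0$ and $\kappa$, with $\kappa$ globally attracting and $\ell^->0$, the interval must reduce to $\{\kappa\}$, i.e.\ $\phi(+\infty)=\kappa$. Together with $\phi(-\infty)=0$, $\phi$ is then a wavefront.

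The step I expect to be the main obstacle is the construction of the ordered pair $\underline\phi\le\overline\phi$ together with the verification that $\mathcal A_c\mathcal F$ leaves $\mathcal S$ invariant: $\mathcal F$ is \emph{not} monotone (since $g$ need not be), so this cannot be read off from the sub/super-solution inequalities and an order-preservation principle and must be checked directly, using the linear majorant $\mathcal N_c\phi\le L(K_c*\phi)$ and the positivity of $g$ on $(0,\infty)$. Because the distributed, asymmetric kernel makes $\mathcal N_c\phi(z)$ sample $\phi$ over all of $\R$, controlling it near $-\infty$ forces one to dovetail the local expansion $g(s)=g'(0)s+o(s)$, the global bound $g(s)\le Ls$, and the exponential tail estimates from $H_0$, all while keeping the exponents of $\overline\phi$ and of $\underline\phi$ mutually compatible; calibrating these exponents is the crux. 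The endpoint case $c=c_\star$ and the non-local version of the fluctuation lemma are the remaining technical points.
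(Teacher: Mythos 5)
Your overall architecture (Green's-function reformulation with $\beta\phi-f(\phi)$, exponentially calibrated barriers, Schauder on a closed convex set, a limiting argument for $c=c_\star$, and the fluctuation/attractivity argument for $\phi(+\infty)=\kappa$) is the same as the paper's. However, there are two concrete gaps in the part you yourself identify as the crux, and the second one is exactly the obstruction the paper's proof is built to circumvent.

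First, your characteristic function $\chi_c(\lambda)=\lambda^2-c\lambda-f'(0)+L\,\mathcal K(c,\lambda)$ is miscalibrated: the super-solution inequality on the exponential part needs $-f(e^{\lambda_1 z})\le -q\,e^{\lambda_1 z}$ for the constant $q$ appearing in $\chi_c$, i.e.\ $f(s)\ge qs$. Hypothesis {\bf $H_2$} does not give $f(s)\ge f'(0)s$; it only gives $f(s)\ge \big(\inf_{s\ge0}f'(s)\big)s$ (this is condition (\ref{con1})). So $c_\star$ must be defined through $\chi_L(z,c)=z^2-cz-\inf_{s\ge0}f'(s)+L\,\mathcal K(c,z)$, as in the paper; with $f'(0)$ in place of $\inf f'$ your threshold can be strictly too small and the super-solution inequality fails for intermediate speeds.

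Second, and more seriously, your sub-/super-solution pair cannot be ordered as described when $L>g'(0)$. Your super-solution decays at $-\infty$ like $e^{\lambda_1 z}$ with $\lambda_1$ the leftmost root of the $L$-majorized characteristic function, while your sub-solution behaves like $e^{\tilde\mu z}$ with $\tilde\mu$ taken from the $g'(0)$-linearization. Since the $g'(0)$-function lies pointwise below the $L$-function, one has $\tilde\mu\le\lambda_1$ in general, hence $e^{\tilde\mu z}/e^{\lambda_1 z}\to+\infty$ as $z\to-\infty$ and $\underline\phi>\overline\phi$ near $-\infty$ for every choice of $M$. Using the rate $\lambda_1$ for the sub-solution instead does not help, because then the lower inequality would require $g(s)\ge Ls(1-o(1))$ near $0$, which contradicts $g(s)=g'(0)s+o(s)$ when $g'(0)<L$. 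The paper resolves this by (i) imposing the auxiliary condition {\bf L} that $g(s)=Ls$ and $f_\beta(s)=(\beta-\inf f')s$ exactly on $[0,\delta)$, so that both barriers $\phi^\pm$ can be built from the \emph{same} exponent $\lambda$ (leftmost root of $\chi_L$) and invariance of $\mathcal R=\{\phi^-\le\phi\le\phi^+\}$ can be checked directly without monotonicity of $g$; and then (ii) approximating the given $g,f_\beta$ by truncations $g_n,{f_\beta}_n$ that are linear on $[0,1/n]$, obtaining fixed points $\phi_n$, and passing to the limit using the uniform a priori bound of Lemma \ref{beta2} and, crucially, the uniform persistence $\liminf_{t\to+\infty}\phi_n(t)>\xi_1$ of Lemma \ref{ga} to prevent the limit from collapsing to $0$. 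Without this approximation-plus-persistence step (or an equivalent device), the fixed-point construction for a genuinely nonlinear, non-monotone $g$ with $g'(0)<L$ does not close. Your treatment of $c=c_\star$ and of $\phi(+\infty)=\kappa$ is consistent with the paper's.
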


\begin{remark}
We observe  that Theorem \ref{2} shows that  the  condition 
\begin{equation}\label{e11}
g(s)\leq Ls,\,\,\, s\geq 0,
\end{equation}
 when $L=g'(0)$ is not at all obligatory to prove the existence of fast semi-wavefronts solution. Moreover, our result also incorporates asymmetric kernels and the critical case. Thus Theorem \ref{2} improves or completes  the existence results in \cite{fz,tz,xw}, where  the existence was established  for  $g$ satisfying  (\ref{e11})  and  assuming either even or Gaussian kernel $K$. 
\end{remark}
The paper is organized as follows. Section 2 contains some preliminary results. In Section 3, we show some geometric properties of the bounded solutions. In the fourth section, we  our main results are  proved.  In the last section  some applications are presented.
\section{Preliminaries.}
 This section  contains some preliminary results and  transformations are needed to apply the methods of \cite{gpt}.
 
First, note that the profile $y=\phi$ of the  semi-wavefront  solution $u(t,x) = \phi(x +ct)$ to (\ref{i1}) must satisfy the equation
\begin{equation}\label{i2}
 y''(t) - cy'(t)-f(y(t))+ \int_0^\infty\int_{\R}K(s,w)g\left(y(t-cs-w)\right)dwds=0
\end{equation}
for all $t \in \R$. Note that this equation  can be written as 
\begin{equation}\label{ii3}
 y''(t) - cy'(t)-\beta y(t)+f_\beta(y(t))+  \int_0^\infty\int_{\R}K(s,w)g(y(t-cs-
w))dwds=0,
\end{equation}
  where $f_\beta(s)=\beta s- f(s)$ and $\beta$  is chosen large enough such that  $\beta > f'(0)$.  
Thus in order to establish the existence of semi-wavefront solution to (\ref{i1}), we have to prove the existence of positive bounded solution $\phi$  of equation (\ref{i2}), satisfying $\phi(-\infty)=0$.  

Now, being  $\phi$  a positive bounded solution to (\ref{i2}),  it should satisfy the  integral equation
\begin{align} \label{q2}
\phi(t)&=\frac{1}{\sigma(c)}\left(
\int_{-\infty}^te^{\nu(c)(t-s)}(\mathcal G\phi)(s)ds
+\int_t^{+\infty}e^{\mu(c)(t-s)}(\mathcal G\phi)(s)ds\right)\\\nonumber
&= \nonumber\int_\R k_1(t-s)(\mathcal G\phi)(s)ds,\,\, t\in \R,
\end{align}
where $$k_1(s)=(\sigma(c))^{-1}\left\{\begin{array}{cc}e^{\nu(c) s}, & s\geq 0 \\e^{\mu(c) s}, & s<0\end{array}\right.,$$ $\sigma(c)=\sqrt{c^2+4\beta}$,  $\nu(c)<0<\mu(c)$ are the roots of  $z^2-cz-\beta=0$
 and  the operator $\mathcal G$ is defined as $$(\mathcal G\phi)(t):= \int_0^\infty\int_{\R}K(s,w)g(\phi(t-cs-
w))dwds+f_\beta(\phi(t)).$$ 
 Note that $(\mathcal G\phi)(t)$ can be rewritten as 
 \begin{align} \label{q3}\nonumber
 (\mathcal G\phi)(t)&=\int_\R g(\phi(t-r))\int_0^\infty K(s,r-cs)ds dr+f_\beta(\phi(t))\\
&=\int_\R g(\phi(t-r))k_2(r) dr+f_\beta(\phi(t)),
\end{align}
where,  by Fubini's Theorem, $$k_2(r)=\int_0^\infty K(s,r-cs)ds,$$
is well defined for all $r\in \R$.
Finally,   from (\ref{q3}) we get that $\phi$ also must satisfy the equation
\begin{align}\label{dk0} 
\phi(t)&=(k_1*k_2)*g(\phi)(t)+ k_1* f_\beta(\phi)(t), \,t\in \R, \end{align}
where $*$ denotes convolution $(f*g)(t)=\int_\R f(t-s)g(s)ds$. 

 In order to apply some results of \cite{gpt}, we rewrite equation (\ref{dk0}) as
\begin{align}\label{dk1} 
 \phi(t)=\int_Xd\rho(\tau)\int_\R \mathcal N(s,\tau)g(\phi(t-s),\tau) ds, \,t\in \R, \end{align}
where 
\begin{align*}\mathcal N(s,\tau)=\left\{\begin{array}{cc} ( k_1*k_2)(s),& \tau=\tau_1, \\   k_1(s),& \tau=\tau_2, \end{array}\right.\quad  g(s,\tau)=\left\{\begin{array}{cc}  g(s), & \tau=\tau_1, \\ f_\beta(s),& \tau=\tau_2,\end{array}\right. 
\end{align*}
and $X=\{\tau_1,\tau_2\}$. Thus we can invoke the theory developed in \cite{gpt} to prove  the existence of positive bounded solution of (\ref{i2}), vanishing at $-\infty$.

Now we have to introduce several definitions. Let  $c_*$ [respectively, $c_\star$] be the minimal value of  $c$ for which
\begin{equation*}\label{c1}
\chi_0(z,c):=z^2- cz-f'(0)+g'(0)\int_0^\infty \int_{\R}K(s,w)e^{-z\left(cs+w\right)}dwds,
\end{equation*}
\vspace{-0.2cm}
[respectively,
\vspace{-0.1cm}
\begin{equation*}\label{c2}
\chi_L(z,c):=z^2- cz-\inf_{s\geq 0}f'(s)+L\int_0^\infty \int_{\R}K(s,w)e^{-z\left(cs+w\right)}dwds,\, L\geq g'(0)] 
\end{equation*} 
 has  at least one positive root. We observe that  $c_\star\geq c_*$ and the function $\chi_0(z,c)$ is associated with the  linearization of (\ref{i2}) along the trivial equilibrium.  Moreover, we also introduce  the characteristic function $\chi$ associated with the variational equation along the trivial steady state of (\ref{dk1}), by 
$$
\chi(z):= 1-\int_\R \int_X \mathcal N(s,\tau)g'(0,\tau)d\rho(\tau)e^{-sz}ds. 
$$ 
  Observe that 
\begin{align}\label{p}\nonumber
\chi(z)&=1-g'(0)\int_\R \mathcal N(s,\tau_1)e^{-zs}ds-(\beta-f'(0))\int_\R \mathcal N(s,\tau_2)e^{-zs}ds\\\nonumber&= 1-\frac{\beta-f'(0)}{\beta+cz-z^2}-\frac{g'(0)}{\beta+cz-z^2}\int_0^\infty\int_\R K(r,w)e^{-z(rc+w)}dwdr\\&
=-\frac{\chi_0(z,c)}{\beta+cz-z^2}.
\end{align}
 Note that the zeros of function $\chi(z)$ are determined by the roots of characteristic equation $\chi_0(z,c)=0$ and 
\begin{align}\label{pp}
\chi(0)=-\frac{\chi_0(0,c)}{\beta}=\frac{f'(0)-g'(0)}{\beta}<0.
\end{align}
We also will need the following function
\begin{align*}
 \chi_L(z):= 1-\int_\R \int_X \mathcal N(s,\tau)C(\tau)d\rho(\tau)e^{-sz}ds, 
\end{align*}
where 
\begin{align}\label{clip}
C(\tau)=\left\{\begin{array}{cc}  L, & \tau=\tau_1, \\ \beta-\inf_{s\geq 0}f'(s),& \tau=\tau_2,\end{array}\right.
\end{align}
 is measurable function on $(X,\mu)$ with $L\geq g'(0)$.  
 Similarly, note that 
\begin{align}\label{car}
\chi_L(z)= -\frac{\chi_L(z,c)}{\beta+cz-z^2}
\end{align}
and $$\chi_L(0)=\frac{\inf_{s\geq 0}f'(s)-L}{\beta}<0.$$

The properties of the real solutions of the equations $\chi_0(z,c)=\chi_L(z,c)=0$ are shown in the following lemma. We are considering  a  more general equation:  
\begin{equation*}
\mathcal R(z,c):=z^2- cz-q+p\int_0^\infty \int_{\R}K(s,w)e^{-z(cs+w)}dwds=0,
\end{equation*}
where $p>q>0$.  
 \begin{lem} \label{c00}
Suppose that given $c\in \R$, the function $\mathcal R(z,c)$ is defined for all $z$ from some maximal  interval $[0,\delta(c))$,   $\delta(c)\in (0,+\infty]$. Then   there exists $ c^\#\in \R$ such that 
\begin{enumerate}[(i)]
\item for any $c> c^\#$, the function $\mathcal R(z,c)$ has  at least  one positive zero  $z=\lambda_1(c) \in (0,\delta(c))$,  it may have  at most two positive zeros  on $(0,\delta(c))$ and it does not have any negative zero. If the second zero exists, we denote it as $\lambda_2(c)>\lambda_1(c)$. Furthermore, each  $\lambda_j(c)<\mu_q(c)$, where $\mu_q(c)>0$ satisfies the  equation $z^2-cz-q=0$.

\item if $c=c^\#$  and $\lim_{z\uparrow \delta(c^\#)} \mathcal R(z,c^\#)\not=0$, then $\mathcal R(z,c^\#)$  has a unique double root on $(0,\delta(c^\#))$, denoted by $z=\lambda_1(c^\#)$, and $\mathcal R(z,c^\#)>0$ for all $z\not=\lambda_1(c^\#)\in [0,\delta(c^\#))$. 

\end{enumerate}
\end{lem}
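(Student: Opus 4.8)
The plan is to study the function $\mathcal R(z,c)$ as a function of $z$ on $[0,\delta(c))$ for fixed $c$, and then track how its behavior changes as $c$ varies. First I would record the basic values and the structure of the graph: at $z=0$ we have $\mathcal R(0,c)=p-q>0$, and the term $z\mapsto -q+p\int_0^\infty\int_\R K(s,w)e^{-z(cs+w)}dwds$ is convex in $z$ on $[0,\delta(c))$ because $e^{-z(cs+w)}$ is convex in $z$ for each fixed $(s,w)$ and convexity is preserved under (positive) integration. Adding the convex parabola $z^2-cz$, we get that $\mathcal R(\cdot,c)$ is strictly convex on $[0,\delta(c))$. A strictly convex function that is positive at $0$ has at most two zeros on an interval, and if it has zeros they are located where the function has already started to increase again; this immediately gives the ``at most two positive zeros'' claim and, since $\mathcal R(z,c)>\mathcal R(0,c)>0$ would fail only after the function dips below zero, explains why there is no negative zero (on $(-\infty,0]$ convexity plus $\mathcal R(0,c)>0$ and $\mathcal R(z,c)\to+\infty$ as $z\to-\infty$ — from the $z^2$ term — force $\mathcal R>0$ there, or more simply one just checks the integral term plus $-q$ stays $\geq p-q>0$ isn't quite right, so I would instead argue directly that for $z<0$, $-q+p\int K e^{-z(cs+w)} \geq -q + p\int K$? no — the cleanest route is: the problem only asserts behavior of zeros, and for $z \le 0$ strict convexity together with $\mathcal R(0,c) > 0$ and $\mathcal R'(0,c)$ handled case by case, or simply note $\delta(c)$-interval is $[0,\delta(c))$ so negative $z$ may be outside the domain anyway; I would phrase the ``no negative zero'' part carefully using convexity and the sign at $0$).

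Next I would bring in the comparison with $\mu_q(c)$, the positive root of $z^2-cz-q=0$. Since the integral term is strictly positive, at any point $z$ where $z^2-cz-q\geq 0$ we have $\mathcal R(z,c)>0$; hence every zero of $\mathcal R(\cdot,c)$ must lie in the region where $z^2-cz-q<0$, i.e. strictly between the two roots of that quadratic, and in particular below $\mu_q(c)$. This proves the bound $\lambda_j(c)<\mu_q(c)$ and also confirms all zeros are positive (the smaller root of $z^2-cz-q$ is negative when... actually $z^2 - cz - q$ has one negative and one positive root since the product of roots is $-q<0$, so ``$z^2-cz-q<0$'' is the interval from the negative root to $\mu_q(c)$, which when intersected with the domain $[0,\delta(c))$ gives exactly $(0,\mu_q(c))$ minus the possibility of $z=0$; since $\mathcal R(0,c)>0$ there is no zero at $0$).

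Then I would set up the monotonicity in $c$ that produces the threshold $c^\#$. Fix attention on whether $\mathcal R(\cdot,c)$ attains a nonpositive value somewhere on $(0,\delta(c))$; define $m(c):=\inf_{z\in[0,\delta(c))}\mathcal R(z,c)$. The key monotonicity observation is that $\mathcal R(z,c)$ is, for each fixed $z>0$, a \emph{decreasing} function of $c$: the term $-cz$ decreases and, because $K$ is supported on $s\geq 0$ so $cs+w$ has the ``$cs\geq 0$-weighted'' part pushing the exponent, increasing $c$ decreases $e^{-z(cs+w)}$ pointwise, hence decreases the integral — wait, $e^{-z(cs+w)} = e^{-zcs}e^{-zw}$ and for $z>0$, $s\geq0$ increasing $c$ decreases $e^{-zcs}$, so yes the integral decreases, and $-cz$ decreases too; so $\mathcal R(z,c)$ is strictly decreasing in $c$ for every fixed $z>0$ (one must also note $\delta(c)$ is nondecreasing in $c$ for a parallel reason, so the domains are nested upward). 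Therefore $m(c)$ is nonincreasing in $c$, and for $c$ large it becomes negative (e.g. plug in a fixed small $z_0$ and let $c\to+\infty$), while for $c$ very negative $\mathcal R(z,c)>0$ for all $z\geq 0$. Define $c^\#$ as the infimum of those $c$ for which $\mathcal R(\cdot,c)$ has a positive zero, equivalently the infimum of $c$ with $m(c)\le 0$. For $c>c^\#$ we get $m(c)<0$ (strict, by the strict monotonicity), so by strict convexity and $\mathcal R(0,c)>0$ the function must cross zero, giving $\lambda_1(c)$, and has at most two such crossings — that is part (i). For $c=c^\#$, under the stated hypothesis $\lim_{z\uparrow\delta(c^\#)}\mathcal R(z,c^\#)\neq 0$ (which rules out the ``zero escaping to the boundary of the domain'' pathology), continuity/limiting of the zeros from $c>c^\#$ together with strict convexity forces $m(c^\#)=0$ attained at a unique interior point, which is then a double root $\lambda_1(c^\#)$ with $\mathcal R>0$ elsewhere — part (ii).

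The main obstacle I anticipate is the careful treatment of the domain endpoint $\delta(c)$: strict convexity on a half-open interval does not by itself prevent a zero from ``disappearing'' as $z\uparrow\delta(c)$ (the function could tend to a negative or zero limit), and the statement of part (ii) explicitly inserts the hypothesis $\lim_{z\uparrow\delta(c^\#)}\mathcal R(z,c^\#)\neq 0$ precisely to handle this. So the delicate step is to show that at the threshold $c^\#$ the infimum $m(c^\#)=0$ is actually attained at an interior point — I would get this by taking a sequence $c_n\downarrow c^\#$, extracting the double-root-approaching minimizers, and using a normal-families / Helly-type argument together with the convergence $\mathcal R(\cdot,c_n)\to\mathcal R(\cdot,c^\#)$ locally uniformly on $[0,\delta(c^\#))$, ruling out escape to $\delta(c^\#)$ via the hypothesis. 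A secondary technical point is justifying differentiation under the integral sign and the local-uniform convergence, which follows from $H_0$ (the integral converges on $[0,\gamma^\#)$ with room to spare, giving domination on compact subsets).
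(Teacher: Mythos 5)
The paper does not actually prove Lemma~\ref{c00}: it is quoted from \cite[Lemma 3.1]{mau}, so there is no internal argument to compare against. Your skeleton --- strict convexity of $\mathcal R(\cdot,c)$ in $z$ (from convexity of $e^{-z(cs+w)}$ integrated against the nonnegative kernel, plus the parabola), the value $\mathcal R(0,c)=p-q>0$, the observation that any zero must lie where $z^2-cz-q<0$ and hence below $\mu_q(c)$, strict monotonicity of $\mathcal R(z,\cdot)$ in $c$ for $z>0$ together with upward-nested domains, and the definition of $c^\#$ as the threshold --- is the standard route for this kind of characteristic equation and is sound.

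There are, however, concrete gaps you leave open. First, the ``no negative zero'' step: you start an argument (``convexity plus $\mathcal R(0,c)>0$ and $\mathcal R\to+\infty$ as $z\to-\infty$ force $\mathcal R>0$''), correctly recognize it is wrong, and then defer it. The clean fix is available from what you already have: for $c>c^\#$ part (i) supplies a positive zero $\lambda_1(c)$, and a strictly convex function cannot vanish at some $z_-<0$ and at $\lambda_1(c)>0$ while being positive at the intermediate point $z=0$; so a negative zero is incompatible with the existence of a positive one. Second, the claim that $\mathcal R(\cdot,c)>0$ on all of $[0,\delta(c))$ for $c$ sufficiently negative --- which you need for $c^\#$ to be a real number rather than $-\infty$ --- is asserted without proof, and it is not immediate: for $c<0$ one only gets $\int K e^{-z(cs+w)}\ge \int K e^{-zw}$, which need not exceed $1$. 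One must combine the localization $0<\lambda<\mu_q(c)\to 0^+$ as $c\to-\infty$ with a uniform lower bound $p\int K e^{-z(cs+w)}>q$ for all sufficiently small $z>0$ and all $c\le 0$ (e.g.\ by truncating the $w$-integral). Third, part (ii) is only a sketch; the compactness argument you outline (minimizers of $\mathcal R(\cdot,c_n)$ for $c_n\downarrow c^\#$ not escaping to $\delta(c^\#)$, using the hypothesis $\lim_{z\uparrow\delta(c^\#)}\mathcal R(z,c^\#)\ne 0$) is the right idea but would need to be carried out, including the verification that $\delta(c_n)\to\delta(c^\#)$ in a usable sense and that $m(c^\#)=0$ rather than $m(c^\#)<0$. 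None of these is a wrong turn, but as written the proof is incomplete at exactly the points where the lemma is delicate.
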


\begin{proof} See \cite[Lemma 3.1]{mau}. 
\end{proof}

\begin{example}
We consider a space structured population with maturation effects described by the delays, for example marine species,  where the juveniles  move by advection as well as diffusion, but the adults move by diffusion alone.  If $u(t,x)$ denote the density of the population adult in the location $x\in \R$ and  time  $t$, then $u(t,x)$ can be  represented by the following model:
\begin{align}\label{mar}
u_t(t,x) =  d_au_{xx}(t,x)- \mu_a u(t,x) +  \int_0^\infty\int_{\R}g(u(t-s,x-w))\frac{\mu_je^{\frac{-(w+v_js)^2}{4d_js}-\mu_js}}{2\sqrt{\pi d_j s}}dwds,
\end{align}
where $g$ is the birth function, $d_j,v_j,\mu_j$ are respectively the diffusion rate, the advection velocity and the death rate for juveniles and $d_a, \mu_a$ are respectively the diffusion rate and the death rate for adults population (see \cite{gpt} for more details). Note that spatial asymmetry occurs  in this model  with 
$$K(s,w)=\frac{\mu_je^{\frac{-(w+v_js)^2}{4d_js}-\mu_js}}{2\sqrt{\pi d_j s}}.$$

 By scaling of variables, we can suppose that $d_a=1$. Thus the characteristic function $\chi(z,c)$ associated with the linearization of equation (\ref{mar}) along the trivial steady state is given by the following  form
 \begin{equation*}
\chi(z,c):=z^2- cz-\mu_a+p\int_0^\infty \int_{\R}\frac{\mu_je^{\frac{-(w+v_js)^2}{4d_js}-\mu_js}}{2\sqrt{\pi d_j s}}e^{-z(cs+w)}dwds=0,
\end{equation*}
where $p:=g'(0)>\mu_a$. A simple calculation of the integral allows to obtain that
\begin{equation*}
\chi(z,c):=z^2- cz-\mu_a+\frac{p\mu_j}{\mu_j+(c-v_j)z-d_jz^2}.
\end{equation*}
Note that $ \chi(0,c)=p-\mu_a>0$ and $\lim _{c\downarrow-\infty} \mathcal \chi(z,c)=+\infty$ for $z\in (0,+\infty)$. In addition,
 \begin{equation*}
 \frac{\partial^2 \chi}{\partial z^2}(z,c)>0,\quad z\in [0,+\infty),
 \end{equation*}
 the function  $\chi(z,c)$ is strictly convex with respect to $z$, and hence it has  at most two real zeros  for each $c$. For example, in the particular case when the advection for juveniles is $v_j=0.02$, diffusivity $d_j=100$ and $\mu_j=0.001, \mu_a=0.05$ are the death rates for juveniles and  for adults population, respectively, figures \ref{f1} and \ref{f2} show the behavior of $\chi(z,c)$ when $z\geq 0$ and $p=2$.
\begin{figure}[h]
\begin{minipage}{0.49\linewidth}
\begin{flushleft}
\includegraphics[scale=0.26]{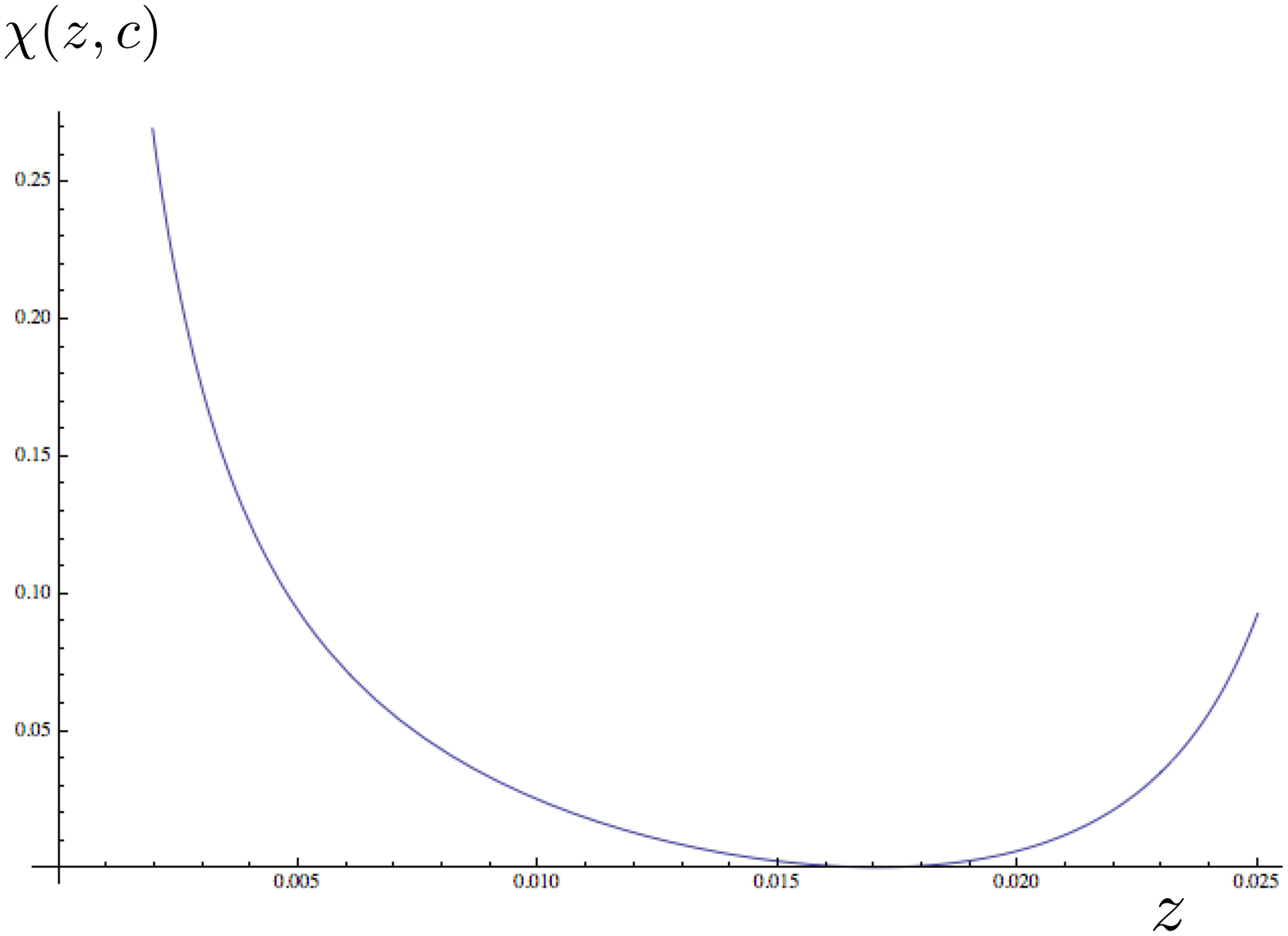}
\caption{{\small  $\chi(z,c), z\geq 0$ for $c=2.854$.}}
\label{f1}
\end{flushleft}
\end{minipage}
\begin{minipage}{0.49\linewidth}
\begin{flushright}
\includegraphics[scale=0.26]{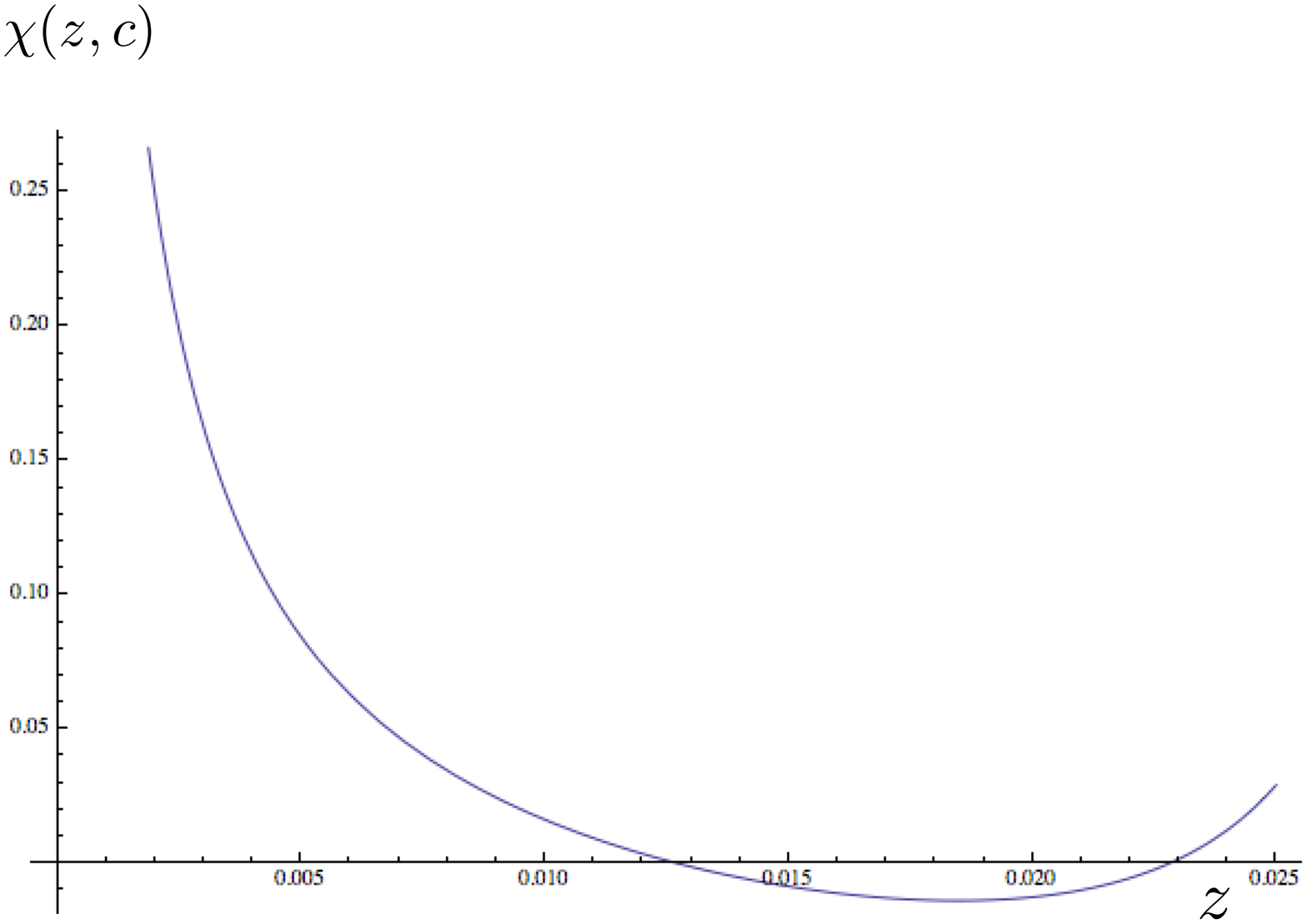}
\end{flushright}
\caption{{\small $\chi(z,c), z\geq 0$ for $c=3$.}}
\label{f2}
\end{minipage}
\end{figure}
\end{example}

\section{Some geometric properties}
Now we show some geometric properties of the bounded solutions to equation (\ref{i2}).
\begin{lem}\label{po}
If $u(t,x)=\phi(x+ct)\geq 0$ is a  bounded solution  of equation (\ref{i2}) such that $\phi$  vanishes at some point, then $\phi\equiv 0$. \end{lem}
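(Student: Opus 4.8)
The plan is to reduce the statement to the fixed-point identity already obtained in Section~2 and then invoke the strict positivity of the convolution kernels. I would first observe that the passage from (\ref{i2}) to the integral equation (\ref{dk0}) uses only that $\phi$ is bounded and continuous, not positive: for such $\phi$ the forcing $\mathcal G\phi$ is bounded and continuous, $\phi$ solves the linear equation $\phi''-c\phi'-\beta\phi=-\,\mathcal G\phi$, and since the characteristic roots satisfy $\nu(c)<0<\mu(c)$ this linear equation has a \emph{unique} bounded solution, namely the one given by the Green kernel $k_1$. Hence the present $\phi\ge 0$ satisfies
\begin{equation*}
\phi(t)=\int_\R (k_1*k_2)(t-s)\,g(\phi(s))\,ds+\int_\R k_1(t-s)\,f_\beta(\phi(s))\,ds,\qquad t\in\R .
\end{equation*}

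Next I would record the sign properties that make this representation rigid. From the explicit form $k_1(s)=\sigma(c)^{-1}e^{\nu(c)s}$ for $s\ge0$ and $k_1(s)=\sigma(c)^{-1}e^{\mu(c)s}$ for $s<0$, one has $k_1>0$ on all of $\R$; since $k_2\ge 0$ with $\int_\R k_2=\int_0^\infty\!\!\int_\R K=1$ by $H_0$ and Fubini, also $(k_1*k_2)(t)>0$ for every $t$, and both $k_1$ and $k_1*k_2$ are continuous. By $H_1$, $g\ge 0$ on $\R_+$; and, writing $M:=\sup_\R\phi<\infty$ and taking $\beta$ large enough that $\beta>\sup_{[0,M]}f'$ (as is assumed in the framework, enlarging $\beta$ if necessary), the function $f_\beta(s)=\beta s-f(s)$ is strictly increasing on $[0,M]$ with $f_\beta(0)=0$, so $f_\beta(\phi(s))\ge 0$ for every $s$. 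Thus both integrands in the identity above are non-negative for each fixed $t$.

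To conclude: if $\phi(t_0)=0$ for some $t_0\in\R$, evaluating the identity at $t_0$ writes $0$ as a sum of two non-negative integrals, so each vanishes; in particular $\int_\R (k_1*k_2)(t_0-s)\,g(\phi(s))\,ds=0$. The integrand is continuous, non-negative, and $(k_1*k_2)(t_0-s)>0$ for all $s$, so $g(\phi(s))=0$ for every $s$, whence $\phi\equiv 0$ because $g(\xi)>0$ for $\xi>0$ ($H_1$); equivalently one may use the second integral together with the strict monotonicity of $f_\beta$. I do not expect any genuine obstacle here: the only points meriting a line of care are the validity of (\ref{dk0}) for a merely non-negative bounded solution — which, as above, follows from boundedness alone — and the strict positivity of $k_1$ and $k_1*k_2$, which is exactly what turns a single cancellation into the identity $\phi\equiv 0$.
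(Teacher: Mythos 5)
Your proof is correct and follows essentially the same route as the paper: represent $\phi$ through the positive Green kernel, note that the integrand is non-negative, and use the strict positivity of the kernel on all of $\R$ to turn the single zero $\phi(t_0)=0$ into the vanishing of the whole forcing term. The only cosmetic difference is that you extract the conclusion from the $g$-term via $(k_1*k_2)>0$, while the paper first deduces $\mathcal G\phi\equiv 0$ from $k_1>0$ and then uses the $f_\beta$-term; both are equivalent, and your extra remark justifying the integral representation for a merely bounded non-negative solution is a welcome (if minor) addition.
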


\begin{proof}
Suppose that there exists $t_0\in \R$ such that $\phi(t_0)=0$. From (\ref{q2}) we get that 
\begin{align*} 
\phi(t_0)= \int_\R k_1(t_0-s)(\mathcal G\phi)(s)ds=0.
\end{align*}
Since $k_1(t)>0$ and $(\mathcal G\phi)(t)\geq 0$ for all $t\in \R$, we necessarily have $(\mathcal G\phi)(t)= 0$ for all $t\in \R$. 

 In this way,  according to  $f(0)=g(0)=0$ and since $f$ is  continuous  on $[0,\sup_{t\in \R} \phi(t)]$,   we can choose $\beta>0$ sufficiently large   such that $f_\beta(s)=\beta s- f(s)\geq 0$ for all $s\in (0,\sup_{t\in \R} \phi(t)]$. Thus we have
$$\int_\R g(\phi(t-r))k_2(r) dr=f_\beta(\phi(t))=0, \, t\in \R,$$
which implies that $\phi(t)=0$ for all $t\in \R$.
\end{proof}

\begin{lem} \label{to}Assume that {\bf{ $H_0$}}-{\bf{$H_2$}}  and (\ref{e11}) hold. Let $u(t,x)=\phi(x+ct)\geq 0$ be a  bounded solution of equation (\ref{i2}) with speed $c\geq c_*$.  If  $\phi(-\infty)=0$, then  $\lim \inf_{t\to +\infty}\phi(t)\geq\delta(\phi)>0$ for some $\delta(\phi)>0$. \end{lem}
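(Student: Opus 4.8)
The plan is to argue by contradiction and exploit the integral representation \eqref{q2}–\eqref{dk0} together with the hypothesis \eqref{e11} and the definition of $c_*$. Assume $\liminf_{t\to+\infty}\phi(t)=0$. Combined with $\phi(-\infty)=0$, $\phi\not\equiv 0$ (by Lemma \ref{po}, $\phi>0$ everywhere), and boundedness, this means $\phi$ returns arbitrarily close to $0$ near $+\infty$. The idea is to show that, near the trivial equilibrium, the linearization governed by $\chi_0(z,c)$ forces exponential growth of $\phi$ away from $0$ whenever $c\ge c_*$, which is incompatible with $\phi$ being arbitrarily small again. Concretely, I would first fix $\varepsilon>0$ small and use $H_1,H_2$ to choose a linear lower bound: there is $\eta>0$ such that $g(s)\ge (g'(0)-\varepsilon)s$ and $f_\beta(s)\ge (\beta - f'(0)-\varepsilon)s$ for $s\in[0,\eta]$; together with \eqref{e11} these pinch $(\mathcal G\phi)(t)$ between linear functions of the relevant translates of $\phi$ on the set where $\phi\le\eta$.

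The key step is a Diekmann–Kaper / sliding-type argument on the interval $(-\infty, T]$ where $T$ is chosen so that $\phi(t)\le\eta$ for all $t\le T$ (such $T$ exists because $\phi(-\infty)=0$, but to handle the recurrence near $+\infty$ I would instead localize around a sequence $t_n\to+\infty$ with $\phi(t_n)\to 0$ and use that $\phi$ is small on long intervals there). On such an interval the integral equation \eqref{dk0} restricted to small values reads, up to the $\varepsilon$-errors, as the renewal equation with kernel whose Laplace transform is $\chi_0(z,c)+ \text{(main quadratic/exponential part)}=0$; since $c\ge c_*$, the characteristic function $\chi_0(\cdot,c)$ — equivalently $\chi(\cdot)$ via \eqref{p} — has a positive root $\lambda_1=\lambda_1(c)\in(0,\delta(c))$ by Lemma \ref{c00}(i) (applied with $q=f'(0)$, $p=g'(0)$). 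The standard consequence is that a nonnegative bounded solution of the associated integral inequality $\phi(t)\ge \int k(t-s)(g'(0)-\varepsilon)\phi(s)\,ds$ on a half-line cannot decay to $0$: testing against $e^{-\lambda t}$ for $\lambda$ slightly below $\lambda_1$ (so the kernel transform is $>1$) yields a contradiction with $\phi\to 0$ along the recurrence times. This is the part that must be done carefully, tracking the $\varepsilon$'s so that for $\varepsilon$ small the perturbed characteristic function $z^2-cz-f'(0)-\varepsilon + (g'(0)-\varepsilon)\widehat K(z)$ still has a positive zero (continuity in the coefficients, plus Lemma \ref{c00}), thereby giving the growth.

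From the growth/non-decay conclusion one extracts a uniform lower bound: if $\phi$ were to dip below a threshold $\delta(\phi)$ infinitely often near $+\infty$, the previous paragraph applied on the long near-$0$ intervals produces exponential separation from $0$, contradicting the smallness at the next recurrence time; hence $\liminf_{t\to+\infty}\phi(t)\ge\delta(\phi)$ for some $\delta(\phi)>0$. I expect the main obstacle to be exactly the bookkeeping that turns a \emph{pointwise recurrence} $\liminf\phi(t)=0$ into a statement on \emph{long intervals} where $\phi$ stays small — one needs that between two deep dips $\phi$ cannot have grown large and come back without violating the integral equation, which typically follows from boundedness of $\phi$ and of $\phi',\phi''$ (elliptic/parabolic estimates from \eqref{i2}) plus a compactness/translation argument: translating $\phi$ by the dip times produces a limiting nonnegative bounded entire solution that is $0$ at one point, hence $\equiv 0$ by Lemma \ref{po}, while also satisfying the linear lower bound — the contradiction. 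Packaging this limiting-equation step cleanly, and checking that the limit solves \eqref{i2} (dominated convergence using $H_0$ for the nonlocal term), is where the real work lies; everything else is the Laplace-transform positivity coming from $c\ge c_*$ and Lemma \ref{c00}.
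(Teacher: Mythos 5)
The paper does not reprove this statement from first principles: its proof is a verification that the hypotheses of the separation--dichotomy theorem and its corollary in \cite{gpt} ([Theorem 3] and [Corollary 4]) are met --- namely $\chi(0)<0$ from \eqref{pp}, the global linear majorization $g(s,\tau)\le C_\delta(\tau)s$ on $[0,\delta]$ (which for the $\tau_2$-component $f_\beta$ requires choosing $\beta$ via [Lemma 4.1] of \cite{mau} so that $0\le f_\beta(s)\le(\beta-\inf_{s\ge0}f'(s))s$ near $0$), positivity from Lemma \ref{po}, and the fact that all real zeros of $\chi$ are positive for $c\ge c_*$. The conclusion then comes in two steps: first the dichotomy ``either $\phi(+\infty)=0$ or $\liminf_{t\to+\infty}\phi(t)>0$'', and second the exclusion of $\phi(+\infty)=0$ because a positive bounded solution vanishing at both ends would force $\chi$ to have a nonpositive real zero. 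Your plan attempts to rebuild this machinery from scratch, which is legitimate in principle and uses the same underlying ingredients (linear bounds near $0$, location of the real zeros of $\chi_0(\cdot,c)$, Lemma \ref{c00}).

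However, as written the endgame has a genuine hole. Your translation argument concludes that the limit $\psi$ of the translates $\phi(\cdot+t_n)$ along dip times is a nonnegative bounded solution vanishing at a point, hence $\psi\equiv0$ by Lemma \ref{po}, and you then claim a contradiction because $\psi$ ``also satisfies the linear lower bound''. But $\psi\equiv0$ satisfies every such lower bound, so no contradiction results; to make this scheme work you would need the limit to be nontrivial, i.e.\ a uniform positive lower bound for $\phi$ on some fixed compact neighbourhood of each $t_n$ --- which is essentially the statement you are trying to prove. The step you flag as ``where the real work lies'' (upgrading pointwise recurrence to smallness on long intervals, and then running the renewal/Laplace estimate there) is exactly the content of [Theorem 3] of \cite{gpt} and is not supplied. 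Two further points: (i) your pinching uses only lower linear bounds $g(s)\ge(g'(0)-\varepsilon)s$, $f_\beta(s)\ge(\beta-f'(0)-\varepsilon)s$, but the dichotomy half of the argument needs the \emph{upper} bounds --- \eqref{e11} for $g$ and the nontrivial inequality $f_\beta(s)\le(\beta-\inf_{s\ge0}f'(s))s$ near $0$, which you never verify; (ii) the decisive spectral fact is not merely that $\chi_0(\cdot,c)$ has a positive root for $c\ge c_*$, but that it has \emph{no} root in $(-\infty,0]$ (Lemma \ref{c00}(i) together with $\chi(0)<0$), since it is the absence of nonpositive zeros that forbids $\phi$ from decaying at $+\infty$. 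Either supply the full renewal-equation argument or, as the paper does, reduce to the cited results of \cite{gpt} after checking their hypotheses.
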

\begin{proof}
First, we observe that  $\chi(0)<0$, by (\ref{pp}). In addition,  the conditions  (\ref{e11}) imply that $g(s,\tau_1)\leq Ls$ for all $s\geq 0$. In this way,  from \cite[Lemma 4.1]{mau}  we get that for $\delta>0$, there exists $\beta=\beta(\delta)>0$ sufficiently large such that   $f_\beta(s)\geq 0$ for all $s\geq 0$ and
$$f_\beta(s) \leq \Big(\beta-\inf_{s\geq 0}f'(s)\Big)s, \quad s\in [0,\delta].$$
Consequently,   $g(s,\tau_2)\leq \Big(\beta-\inf_{s\geq 0}f'(s)\Big)s$ and $g(s,\tau)\leq C_\delta(\tau)s$ for all $s\in [0,\delta]$, where $C_\delta(\tau_1)=L$ and $C_\delta(\tau_2)=\beta-\inf_{s\geq 0}f'(s)$. Note that $C_\delta(\tau)\geq 0$ is a measurable function and
\begin{align*}
\int_X C_\delta(\tau)d\mu(\tau)\int \mathcal N(s,\tau)ds&= L\int_\R(k_1*k_2)(s)ds+(\beta-\inf_{s\geq 0}f'(s))\int_\R k_1(s)ds
\\&=1+\frac{L-\inf_{s\geq 0}f'(s)}{\beta}<+\infty.
\end{align*}
Finally,  from Lemma \ref{po} we obtain  that all the hypotheses of \cite[Theorem 3]{gpt} hold. In consequence,  we can conclude  that   either $\phi(+\infty)=0$ or $\lim \inf_{t\to +\infty}\phi(t)>0$. Since $\phi(-\infty)=0$ and  all real zeros of $\chi(z)=0$ are positive for each $c\geq c_*$ ,  \cite[Corollary 4]{gpt} implies that $\lim \inf_{t\to +\infty}\phi(t)\geq\delta(\phi)>0$ for some $\delta(\phi)>0$.
\end{proof}

\begin{lem}\label{beta2}  Suppose that conditions (\ref{e11})  and {\bf{ $H_0$}}-{\bf{$H_2$}} hold. Furthermore, we also assume that 
\begin{align}\label{con1}
f(s)\geq \inf_{s\geq 0}f'(s)s,\quad s\geq 0.
\end{align}
Let $\phi$ be a positive solution of (\ref{i2}) with speed $c\geq c_\star$ and  $\lambda$ be a positive root of equation $\chi_L(z,c)=0$.  If the solution $\phi$ satisfies the inequality $\phi(t)\leq \delta e^{\lambda t}$ for all $t\in \R$  and  for some $\delta>0$, then  $\phi$ is bounded on $\R$ and 
$$\phi(t)\leq \frac{\sup_{t\geq 0}g(t)}{\inf_{t\geq 0} f'(t)}.$$
\end{lem}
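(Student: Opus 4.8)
The plan is to bootstrap from the linear upper bound $\phi(t)\leq \delta e^{\lambda t}$ via the integral equation (\ref{dk1}), and then pass to a limit to pin down the precise bound $\sup_{t\geq 0}g(t)/\inf_{t\geq 0}f'(t)$. First I would introduce the finite quantities $M:=\sup_{t\geq 0}g(t)$ (finite by $H_1$ together with the sublinear bound (\ref{e11}) is not needed here—boundedness of $g$ on $[0,\infty)$ follows since $g$ is continuous and bounded above by the hypothesis $f(+\infty)>\sup_{s\geq0}g(s)$ in $H_2$, so $M<\infty$) and $m:=\inf_{s\geq 0}f'(s)>0$. The bound $\phi(t)\leq\delta e^{\lambda t}$ already shows $\phi(-\infty)=0$; I want to upgrade this to an a priori $L^\infty$ bound. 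Using (\ref{q3}) and the monotonicity of $f$ together with (\ref{con1}), on the range of $\phi$ we have $f_\beta(\phi(t))=\beta\phi(t)-f(\phi(t))\leq(\beta-m)\phi(t)$, while $g(\phi(t-r))\leq M$ for all $r$. Substituting into (\ref{q2})--(\ref{q3}) gives, for every $t$,
\[
\phi(t)\leq M\int_\R (k_1*k_2)(s)\,ds+(\beta-m)\int_\R k_1(t-s)\phi(s)\,ds
= \frac{M}{\beta}+(\beta-m)\int_\R k_1(t-s)\phi(s)\,ds,
\]
since $\int_\R k_1=\beta^{-1}$ and $\int_\R k_2=1$.

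The main step is to iterate this inequality. Define $A:=\sup_{t\in\R}\phi(t)$, which is finite a priori because $\phi(t)\leq\delta e^{\lambda t}$ controls $\phi$ near $-\infty$ and I will first argue $\phi$ is bounded near $+\infty$; this itself comes from the same inequality: writing $\Phi_n(t)$ for the $n$-th iterate obtained by repeatedly substituting the displayed bound into itself, and using $\int_\R k_1(t-s)\,ds=\beta^{-1}$, one gets $\Phi_n(t)\leq \tfrac{M}{\beta}\sum_{j=0}^{n-1}\big(\tfrac{\beta-m}{\beta}\big)^j+\big(\tfrac{\beta-m}{\beta}\big)^n\cdot(\text{a term that}\to 0)$, where the geometric series converges because $0<\tfrac{\beta-m}{\beta}<1$ (as $m>0$). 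The tail term vanishes because after $n$ convolutions with $k_1$ the weight $(\beta-m)^n\int\cdots\to 0$ provided $\phi$ grows subexponentially—here I use the $e^{\lambda t}$ bound on the left side of $-\infty$ and, on the right side, a preliminary crude bound (e.g. from $\phi(t)\leq\delta e^{\lambda t}$ being violated would still give linear growth via a Gronwall-type estimate). Passing $n\to\infty$ yields
\[
\phi(t)\leq \frac{M}{\beta}\cdot\frac{1}{1-\frac{\beta-m}{\beta}}=\frac{M}{\beta}\cdot\frac{\beta}{m}=\frac{M}{m}=\frac{\sup_{t\geq0}g(t)}{\inf_{t\geq0}f'(t)},
\]
which is exactly the claimed bound; in particular $\phi$ is bounded on $\R$.

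The delicate point—and the step I expect to be the main obstacle—is justifying that the iteration is legitimate, i.e. that the tail $\big(\tfrac{\beta-m}{\beta}\big)^n\int_\R k_1^{*n}(t-s)\phi(s)\,ds$ genuinely tends to $0$ uniformly in $t$, before I know $\phi$ is bounded. The clean way around this is to not iterate blindly but to use a fixed-point/monotone-convergence argument on the known-bounded region: pick any $T$, use the $e^{\lambda t}$ bound to control $\phi$ on $(-\infty,T]$, and show by a continuation/maximum-principle argument (invoking the positivity of $k_1$ established in Lemma \ref{po}) that $\sup_{t\leq T}\phi(t)$ cannot exceed $M/m$; then let $T\to+\infty$. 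Alternatively, one sets $A=\limsup_{t\to+\infty}\phi(t)$ (finite, to be shown first by a rough bound), takes $\limsup$ on both sides of the displayed inequality, uses Fatou/dominated convergence on the convolution term to get $A\leq \tfrac{M}{\beta}+\tfrac{\beta-m}{\beta}A$, and solves for $A\leq M/m$; since the same bound then controls all of $\phi$ (the left tail being even smaller), we conclude. I would write it in this $\limsup$ form, as it sidesteps the uniform-tail issue entirely and only requires that $\phi$ be locally bounded, which is immediate from continuity together with the $e^{\lambda t}$ estimate.
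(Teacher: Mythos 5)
Your recursive inequality and the final constant are right, but the argument does not close: the boundedness of $\phi$ --- which is half of the lemma's conclusion --- is never actually established, and both of your proposed workarounds presuppose it. The $\limsup$ route needs $A=\limsup_{t\to+\infty}\phi(t)<\infty$ ``to be shown first by a rough bound'' that you never supply, and the reverse-Fatou step for the convolution term itself requires a dominating function, i.e.\ exactly the bound you are trying to prove; the ``continuation/maximum-principle'' alternative is only a gesture. The tell-tale sign of the gap is that your argument never uses the hypothesis that $\lambda$ is a positive root of $\chi_L(z,c)=0$, which is the ingredient that makes the iteration legitimate.

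The paper closes the loop by \emph{not} discarding the exponential bound after the first step. Keeping $\phi(s)\le\delta e^{\lambda s}$ inside the $f_\beta$-term, the Laplace transform identity $\int_\R k_1(s)e^{-\lambda s}\,ds=(\beta+c\lambda-\lambda^2)^{-1}$ gives
\begin{equation*}
\phi(t)\le \rho+\delta\,\theta\, e^{\lambda t},\qquad
\rho:=\frac{\sup_{u\ge0}g(u)}{\beta},\quad
\theta:=\frac{\beta-\inf_{s\ge0}f'(s)}{\beta+c\lambda-\lambda^2},
\end{equation*}
and the relation $\chi_L(\lambda,c)=0$ yields $\beta+c\lambda-\lambda^2=\beta-\inf_{s\ge0}f'(s)+L\int_0^\infty\int_\R K(s,w)e^{-\lambda(cs+w)}\,dw\,ds>\beta-\inf_{s\ge0}f'(s)>0$, hence $\theta<1$ (and also the convergence of the Laplace integral). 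Re-inserting this improved bound and inducting gives $\phi(t)\le\delta e^{\lambda t}\theta^{\,n}+\rho(1-\gamma^{\,n+1})/(1-\gamma)$ with $\gamma=(\beta-\inf_{s\ge0}f'(s))/\beta<1$; for each \emph{fixed} $t$ the term $\delta e^{\lambda t}\theta^{\,n}$ dies as $n\to\infty$, so no uniform-in-$t$ control and no a priori boundedness are needed, and the limit is $\rho/(1-\gamma)=\sup_{u\ge0}g(u)/\inf_{s\ge0}f'(s)$. You should restructure your iteration this way; as written, the proposal has a genuine gap.
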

\begin{proof} 
First, it is clear that  $f_\beta(t)\leq (\beta-\inf_{s\geq 0}f'(s))t$ for all $t\geq 0$.
Thus, being $\phi$ a positive solution of (\ref{dk1}),  we obtain that 
\begin{align*}
\phi(t)&\leq 
\sup_{u\geq 0}g(u) \int_\R  (k_1*k_2)(s) ds+\delta(\beta-\inf_{s\geq 0}f'(s))\int_\R  k_1(s)e^{\lambda(t-s)}ds\\ 
&=\frac{\sup_{u\geq 0}g(u)}{\beta}+\delta(\beta-\inf_{s\geq 0}f'(s))e^{\lambda t}\int_\R  k_1(s)e^{-\lambda s}ds.
\end{align*}
In consequence, by definition of function $k_1$, we get that
\begin{align}\label{bu}
\phi(t)&\leq \frac{\sup_{u\geq 0}g(u)}{\beta}+ \frac{\delta(\beta-\inf_{s\geq 0}f'(s))}{\beta+c\lambda-\lambda^2}e^{\lambda t}.
\end{align}
Now, using the inequality (\ref{bu}) and applying this argument again we obtain by induction that 
$$\phi(t)\leq \delta e^{\lambda t}\theta^n+\frac{\rho(1-\gamma^{n+1})}{1-\gamma},\, n\in \N,$$
where $\rho:=\frac{\sup_{u\geq 0}g(u)}{\beta}$, $\theta:=\frac{\beta-\inf_{s\geq 0}f'(s)}{\beta+c\lambda-\lambda^2}$ and $\gamma:=\frac{\beta-\inf_{s\geq 0}f'(s)}{\beta}$. It  is clear that $\gamma<1$. Moreover, since $\lambda$ is  a positive root of equation $\chi_L(z,c)=0$, it is easy  to check that  also $\theta<1$. Thus, by passing to the limit as $n\to \infty$, we obtain the estimate
$$\phi(t)\leq \frac{\rho}{1-\gamma}=\frac{\sup_{u\geq 0}g(u)}{\inf_{s\geq 0} f'(s)}.$$
\end{proof}
We now  establish some properties of $ \mathcal N(s,\tau)$ and $g(s,\tau)$, which will be necessary to apply   methods of \cite{gpt} in order to obtain the uniform persistence of the positive solutions to equation (\ref{i2}).

\begin{lem} \label{Hip} Assume that {\bf{$H_0$}}-{\bf{$H_2$}} hold.  Furthermore, suppose that  the derivative $f'$ is locally bounded. Then the following statements are valid:
\begin{enumerate}[(i)]
\item The function
$\tilde g(v):=\int_\R \int_{X\setminus\{\tau_1\}} g(v,\tau)\mathcal N(s,\tau)d\rho(\tau)ds$ is a monotone increasing function.
 
\item There exists $\xi_2>0$ such that the function $\theta(v):=v-\tilde g(v)$ is strictly increasing on $[0,+\infty)$, and $\theta(\xi_2)>\sup_{v\geq 0}g(v,\tau_1) \int_\R\mathcal N(s,\tau_1)ds$.
\item Define $G(v):=\theta^{-1}(\frac{1}{\beta}g(v,\tau_1))$. Then $G(0)=0$, $0<G(v)<\xi_2$, $v>0$. Furthermore,  $G'(0)$ is finite and $G'(0)>1$.
\end{enumerate}
\end{lem}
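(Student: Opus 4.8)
The plan is to unpack the definitions of $\mathcal N(\cdot,\tau)$ and $g(\cdot,\tau)$ on the two-point space $X=\{\tau_1,\tau_2\}$ and reduce each of the three statements to elementary monotonicity and calculus facts about the one variable function $f_\beta(s)=\beta s-f(s)$ together with the normalizations $\int_\R(k_1*k_2)(s)\,ds=\int_\R k_1(s)\,ds=1/\beta$. For (i), I would observe that $X\setminus\{\tau_1\}=\{\tau_2\}$, so that $\tilde g(v)=\left(\int_\R k_1(s)\,ds\right)f_\beta(v)=\tfrac{1}{\beta}f_\beta(v)=v-\tfrac{1}{\beta}f(v)$. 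Since $f\in C^1(\R_+,\R_+)$ is strictly increasing with $f'$ locally bounded, and $\beta$ has been chosen with $\beta>f'(0)$ — indeed large enough that $f_\beta(s)\ge 0$ and $f_\beta$ is nondecreasing on the relevant range (this is exactly the kind of choice made in Lemmas \ref{po} and \ref{to}, invoking \cite[Lemma 4.1]{mau}) — the derivative $\tilde g'(v)=\tfrac{1}{\beta}f_\beta'(v)=1-\tfrac{1}{\beta}f'(v)\ge 0$, giving monotonicity of $\tilde g$. One must enlarge $\beta$ once and for all so that $1-f'(v)/\beta\ge 0$ on compact sets; local boundedness of $f'$ makes this possible on any fixed interval, and combined with $f(+\infty)>\sup_{s\ge0}g(s)$ it is harmless at infinity.

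For (ii), with the formula above, $\theta(v)=v-\tilde g(v)=\tfrac{1}{\beta}f(v)$, which is strictly increasing on $[0,+\infty)$ because $f$ is; here one does not even need to shrink to a subinterval, so I would take the $\xi_2$ to be any point with $\tfrac{1}{\beta}f(\xi_2)>\tfrac{1}{\beta}\sup_{v\ge0}g(v)$, i.e. $f(\xi_2)>\sup_{v\ge0}g(v)$, which exists precisely by the hypothesis $f(+\infty)>\sup_{s\ge0}g(s)$ in $H_2$. Since $\sup_{v\ge0}g(v,\tau_1)\int_\R\mathcal N(s,\tau_1)\,ds=\tfrac{1}{\beta}\sup_{v\ge0}g(v)$ by the normalization of $k_1*k_2$, this is exactly the claimed inequality $\theta(\xi_2)>\sup_{v\ge0}g(v,\tau_1)\int_\R\mathcal N(s,\tau_1)\,ds$.

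For (iii), $G(v)=\theta^{-1}\!\left(\tfrac{1}{\beta}g(v,\tau_1)\right)=\theta^{-1}\!\left(\tfrac{1}{\beta}g(v)\right)=f^{-1}(g(v))$, using $\theta(v)=f(v)/\beta$. Then $G(0)=f^{-1}(g(0))=f^{-1}(0)=0$ by $H_1$ and $H_2$; for $v>0$, $g(v)>0$ so $G(v)>0$, and $g(v)\le\sup_{s\ge0}g(s)<f(\xi_2)$ forces $G(v)<\xi_2$ by strict monotonicity of $f$. Finiteness and the strict inequality $G'(0)>1$ follow by differentiating the composition at $0$: $G'(0)=g'(0)/f'(0)$ (both derivatives exist by $H_1$, $H_2$, and $f'(0)>0$), and $g'(0)>f'(0)$ gives $G'(0)>1$. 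I expect the only real subtlety to be bookkeeping: the statement is phrased abstractly over $(X,\rho)$, so the main "obstacle" is simply making the identification $\tilde g(v)=v-f(v)/\beta$, $\theta(v)=f(v)/\beta$, $G=f^{-1}\circ g$ transparently and then fixing $\beta$ large enough (uniformly on the compacta that matter) so that the monotonicity claims hold; once that reduction is in place every assertion is immediate from $H_1$ and $H_2$.
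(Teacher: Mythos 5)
Your proposal is correct and follows essentially the same route as the paper's own proof: both reduce $\tilde g$, $\theta$, $G$ to $f_\beta(v)/\beta$, $f(v)/\beta$, $f^{-1}\circ g$ via the normalizations $\int_\R k_1(s)\,ds=\int_\R (k_1*k_2)(s)\,ds=1/\beta$, pick $\xi_2$ with $f(\xi_2)>\sup_{s\ge 0}g(s)$ from $H_2$, and compute $G'(0)=g'(0)/f'(0)>1$ by the inverse function theorem. Your remark that $\beta$ must be taken large enough relative to the (locally bounded) $f'$ on the relevant compact range is the same caveat the paper handles with the phrase ``for some $\beta$ sufficiently large.''
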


\begin{proof}
First, note that 
$$\tilde g(v)=\int_\R g(v,\tau_2)\mathcal N(s,\tau_2)d\rho(\tau)ds=f_\beta(v) \int_\R k_1(s)ds=\frac{f_\beta(v) }{\beta}.$$
Since $f$ is  strictly increasing and $f'$ is locally bounded, the function  $\tilde g(v)=\frac{\beta v-f(v)}{\beta}$ is monotone increasing on $\R$ for some $\beta$  sufficiently large. Moreover, 
$\theta(v)=v-\frac{f_\beta(v)}{\beta}=\frac{f(v)}{\beta}$, $v\geq 0$, is also strictly increasing on $[0,+\infty)$.
Now, consider $\xi_2>0$ such that $f(\xi_2)>\sup_{s\geq 0} g(s)$. Then 
\begin{align*}
\theta(\xi_2)=\frac{f(\xi_2)}{\beta}
>\frac{1}{\beta}\sup_{v\geq 0}g(v)=\sup_{v\geq 0}g(v) \int_\R k_1(s)ds=\sup_{v\geq 0}g(v,\tau_1) \int_\R\mathcal N(s,\tau_1)ds.
\end{align*}
On the other hand,  an easy computation shows that $G(v)=\theta^{-1}(\frac{1}{\beta}g(v,\tau_1))=f^{-1}(g(v))$ and  $G(0)=0$. In addition, if $y=\theta^{-1}(\frac{1}{\beta}g(v_0))>\xi_2$ for some $v_0>0$, then we have
$f(\xi_2)<g(v_0),$ a contradiction. Hence $G(v)<\xi_2$.

Finally, since $g'(0)>f'(0)>0$, it is clear that $1<G'(0)=\frac{g'(0)}{f'(0)}<\infty$,  by Inverse Function Theorem.
\end{proof}
\begin{remark}
We observe that the function $G$ has several other important properties  which are also necessary to prove the uniform persistence of the positive solution to  (\ref{i2}), see \cite[Lemma 5]{gpt}.
\end{remark}
The next lemma establishes the uniform persistence property of semi-wavefront to (\ref{i2}) holds.
\begin{lem}\label{ga} Assume all hypotheses of Lemma \ref{Hip} are fulfilled and suppose the condition (\ref{e11}). Let $u=\phi(x+ct)$ be a positive bounded solution of equation (\ref{i2}) with speed $c\geq c_*$ and $\xi_1\in(0,\xi_2)$, where $\xi_2$ is  as in Lemma \ref{Hip}, such that $\xi_1>\inf_{t\in \R}\phi(t)$. Then  $\phi(-\infty)=0$ and  $\lim \inf_{t\to +\infty} \phi(t)>\xi_1$.
\end{lem}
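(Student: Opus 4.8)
The plan is to reduce Lemma~\ref{ga} to the abstract uniform-persistence result of \cite{gpt} (specifically \cite[Theorem~4 or the corresponding persistence statement]{gpt}) applied to the integral formulation \eqref{dk1}. Recall that $\phi$ solves \eqref{dk1} with the two-point measure space $X=\{\tau_1,\tau_2\}$, kernels $\mathcal N(\cdot,\tau_1)=k_1*k_2$, $\mathcal N(\cdot,\tau_2)=k_1$, and nonlinearities $g(\cdot,\tau_1)=g$, $g(\cdot,\tau_2)=f_\beta$. First I would record the two structural facts already in hand: condition \eqref{e11} gives $g(s,\tau_1)\le Ls$, and the choice of $\beta$ large (via \cite[Lemma~4.1]{mau}) gives $0\le f_\beta(s)\le(\beta-\inf f')s$ on $[0,\xi_2]$, so near $0$ the nonlinearities are dominated by linear functions whose associated characteristic function is $\chi_L(z)$ (equivalently $\chi_L(z,c)$ via \eqref{car}); since $c\ge c_*$ every real zero of $\chi(z)=0$ is positive, which is the spectral hypothesis needed for the persistence machinery. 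Second, I would verify the remaining structural hypotheses of \cite{gpt}: finiteness of $\int_X C_\delta(\tau)\,d\mu(\tau)\int_\R \mathcal N(s,\tau)\,ds = 1 + (L-\inf f')/\beta$ (exactly the computation already done in the proof of Lemma~\ref{to}), the monotonicity and sub-/super-linearity properties of $\tilde g$, $\theta$, $G$ established in Lemma~\ref{Hip} (in particular $G'(0)=g'(0)/f'(0)>1$, and $G(v)<\xi_2$ for all $v>0$), and the positivity/irreducibility of the kernels $k_1,k_2>0$.

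Next I would invoke Lemma~\ref{po} to rule out the degenerate alternative: any nonnegative bounded solution that touches zero is identically zero, so a nonconstant positive $\phi$ stays strictly positive, and the dichotomy from \cite[Theorem~3]{gpt} (as used in Lemma~\ref{to}) says either $\phi(+\infty)=0$ or $\liminf_{t\to+\infty}\phi(t)>0$. I would then bring in the hypothesis $\xi_1>\inf_{t\in\R}\phi(t)$ together with the fixed-point/comparison construction of \cite[Lemma~5 and the persistence theorem]{gpt}: because $G$ fixes no point in $(0,\xi_2)$ other than through its dynamics and $G'(0)>1$, the value $\inf\phi$ being below $\xi_1<\xi_2$ forces, via the sub-homogeneous iteration $G$, that the solution cannot hover at a small positive level — the only consistent behaviors are $\phi(-\infty)=0$ (the boundary condition defining a semi-wavefront) and $\liminf_{t\to+\infty}\phi(t)>\xi_1$. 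Concretely, I would show $\phi(-\infty)$ exists and equals a zero of $v=\tilde g(v)+\frac1\beta g(v,\tau_1)$ in $[0,\xi_1)$, which by $G'(0)>1$ and $\inf\phi<\xi_1$ must be $0$; and symmetrically that $\liminf_{+\infty}\phi$ is a fixed point $\ge\xi_1$, hence (using $\xi_1<\xi_2$ and the strict inequality $\theta(\xi_2)>\sup g(v,\tau_1)\int\mathcal N(s,\tau_1)ds$ from Lemma~\ref{Hip}(ii)) strictly exceeds $\xi_1$.

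The main obstacle I anticipate is not any single estimate but the bookkeeping required to match the present setup exactly to the hypothesis list of the abstract theorem in \cite{gpt}: one must confirm that the measure-space formalism $(X,\mu)=(\{\tau_1,\tau_2\},\rho)$, the kernels $\mathcal N(\cdot,\tau)$, and the nonlinearities $g(\cdot,\tau)$ satisfy every integrability, continuity, monotonicity, and positivity condition demanded there, and that the relevant characteristic function for the \emph{sublinear bound} is $\chi_L$ rather than $\chi$, while the \emph{dichotomy} uses $\chi$; conflating the two would be the natural error. A secondary delicate point is ensuring the hypothesis $c\ge c_*$ (not merely $c\ge c_\star$) suffices here — it does, because the sublinear domination needed for persistence only requires positivity of the zeros of $\chi(z,c)=0$, which holds for $c\ge c_*$ by Lemma~\ref{c00}(i) applied with $q=f'(0)$, $p=g'(0)$. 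Once these identifications are in place, the conclusion $\phi(-\infty)=0$ and $\liminf_{t\to+\infty}\phi(t)>\xi_1$ follows directly from the cited results, so the proof is essentially a verification-and-invocation argument rather than a new construction.
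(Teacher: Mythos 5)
Your proposal is correct and follows essentially the same route as the paper: the paper's proof is exactly a verification-and-invocation argument, checking the hypotheses (the structural/integrability condition, the positivity condition via Lemma \ref{po}, and the nonlinearity condition via Lemma \ref{Hip}, together with $\chi(0)<0$) and then citing the uniform-persistence theorem of \cite{gpt} (Theorem 6 there). Your additional remarks on distinguishing the roles of $\chi$ and $\chi_L$ and on why $c\ge c_*$ suffices are consistent with, and somewhat more detailed than, what the paper records.
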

\begin{proof}
The proof follows from \cite[Theorem 6]{gpt}. In fact, we need the conditions $\textbf{( C )}$, $\textbf{( P )}$ and $\textbf{( N )}$ of \cite{gpt} to be satisfied. In this way, it is clear that the hypothesis $\textbf{( C )}$ holds and $\chi(0)<0$. From Lemma \ref{Hip} we obtain that the hypothesis $\textbf{( N )}$ also holds and the condition $\textbf{( P )}$ is obtained from Lemma \ref{po} . 
\end{proof}

\section{The existence}

Throughout all this section, we assume conditions (\ref{e11}) and (\ref{con1}) hold, and that the speed $c\geq c_\star$.  Let $\lambda>0$ be the leftmost positive root of equation $\chi_L(z,c)=0$ and $m>\lambda$ such that $\chi_L(m,c)<0$, if $c>c_\star$. Note that  for each fix $\tau$ we  have
\begin{align}\label{supl}
g(s,\tau)\leq C(\tau) s, \,s\in \R,
\end{align}
where $C(\tau)$ is defined in (\ref{clip}).

We first consider  $c>c_\star$ and  for some $\delta>0$ we define the functions 
$$\phi^-(t):=\delta e^{\lambda t}(1-e^{(m-\lambda)t})\chi_{\R_-}(t)$$ and $$\phi^+(t):=\delta e^{\lambda t},\,\,t\in \R.$$  
In addition, we will consider the following space
$$X:=\Big\{\phi\in C(\R,\R):||\phi||\\=\sup_{s\leq 0}e^{-\lambda s/2}|\phi(s)|+\sup_{s\geq0}e^{-ms}|\phi(s)|<+\infty\Big\},$$
and the operator $\mathcal A: \mathcal R \to X$, where 
$\mathcal R:= \{\phi\in X:\phi^-(t)\leq \phi(t)\leq \phi^+(t), t\in\R\}$ and
$$\mathcal A \phi (t):=\int_Xd\rho(\tau)\int_\R \mathcal N(s,\tau)g(\phi(t-s),\tau) ds.$$
Note that $\mathcal R\subseteq X$ is closed, bounded and convex set of $X$.
We want  to prove the existence of  $\phi\in \mathcal R$ such that $\mathcal A \phi=\phi$ and $\sup_{s\in \R} \phi(s)<+\infty$. For this, we will prove, in the following Lemma, that  $\mathcal R(\mathcal A)\subseteq \mathcal A$ and $\mathcal A$ is completely continuous map, if we suppose the additional condition
 \begin{description}
 \item[$\textbf{ L:}$] $g(s)=Ls$ and $f_\beta(s)= \Big(\beta-\inf_{s\geq 0}f'(s)\Big)s$  for all $s\in [0,\delta)$.
\end{description}
\begin{lem} \label{er} Assume that {\bf{$H_0$}}-{\bf{$H_2$}}, the condition (\ref{supl}) and $\textbf{ L}$ hold. Then $\mathcal A: \mathcal R \to \mathcal R$ is completely continuous map. Furthermore, $\mathcal A$ has a fix point $\phi$ in $\mathcal R$ such that  $\sup_{t\in \R} \phi(t)<\infty$.
\end{lem}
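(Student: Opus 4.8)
The plan is to verify, in order, the three assertions needed for a Schauder‑type argument: (a) $\mathcal A$ maps $\mathcal R$ into $\mathcal R$; (b) $\mathcal A$ is continuous and maps bounded sets into relatively compact sets (complete continuity); (c) $\mathcal A$ has a fixed point with finite supremum. First I would record the basic positivity and integrability facts: under $H_0$--$H_2$ and condition $\textbf{L}$, the kernels $\mathcal N(s,\tau_1)=(k_1*k_2)(s)$ and $\mathcal N(s,\tau_2)=k_1(s)$ are nonnegative, integrable, and $\int_\R (k_1*k_2)(s)\,ds=\tfrac1\beta$, $\int_\R k_1(s)\,ds=\tfrac1\beta$. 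The normalization $\chi_L(\lambda,c)=0$ is exactly the statement that $\int_X d\rho(\tau)\int_\R \mathcal N(s,\tau)C(\tau)e^{-\lambda s}\,ds\cdot e^{\lambda t}=e^{\lambda t}$, i.e. $t\mapsto e^{\lambda t}$ is a fixed point of the linearized operator $\mathcal A_L\phi(t):=\int_X d\rho(\tau)\int_\R \mathcal N(s,\tau)C(\tau)\phi(t-s)\,ds$; likewise $\chi_L(m,c)<0$ says $\mathcal A_L(e^{m\cdot})(t)<e^{mt}$.

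For (a), the upper bound is the easy half: since $g(s,\tau)\le C(\tau)s$ for $s\ge 0$ by (\ref{supl}), and $0\le\phi(t)\le\phi^+(t)=\delta e^{\lambda t}$ on $\mathcal R$, monotonicity of the integrand gives $\mathcal A\phi(t)\le \delta\,\mathcal A_L(e^{\lambda\cdot})(t)=\delta e^{\lambda t}=\phi^+(t)$. For the lower bound $\mathcal A\phi(t)\ge\phi^-(t)$ I would use condition $\textbf{L}$: on the range of interest $\phi(t)\le\delta e^{\lambda t}$ lies in $[0,\delta)$ whenever $t$ is negative enough, so there $g(\phi,\tau_1)=L\phi$ and $f_\beta(\phi)=(\beta-\inf f')\phi$ exactly, hence $\mathcal A\phi$ agrees with $\mathcal A_L\phi$ on that region; then $\mathcal A_L\phi\ge \mathcal A_L\phi^-$ and a direct computation using $\mathcal A_L(e^{\lambda\cdot})=e^{\lambda\cdot}$ and $\mathcal A_L(e^{m\cdot})\le e^{m\cdot}$ (valid since $\chi_L(m,c)<0$) gives $\mathcal A_L\phi^-(t)=\delta e^{\lambda t}-\delta\,\mathcal A_L(e^{m\cdot})\chi_{\R_-}(t)\ge \delta e^{\lambda t}(1-e^{(m-\lambda)t})$ for $t\le 0$ and $\ge 0\ge\phi^-(t)$ for $t\ge 0$. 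This is the step I expect to be the main obstacle: one must be careful that $\phi^-$ is not everywhere $\le\delta$, so the pointwise identification of $\mathcal A$ with $\mathcal A_L$ only holds where $\phi\le\delta$, and the inequality on the remaining set must be obtained from $\phi^-\le\phi^+$ and nonnegativity of the nonlinearities together with the sign of $f_\beta$ arranged by choosing $\beta$ large; writing $\delta$ small enough keeps the whole range inside $[0,\delta)$ on $\R_-$.

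For (b), complete continuity follows the now‑standard scheme for these convolution operators in the weighted space $X$: continuity of $\mathcal A$ from uniform continuity of $g(\cdot,\tau)$ on compacts together with the exponential weights controlling the tails via the finite integrals $\int_\R\mathcal N(s,\tau)e^{-\lambda s/2}\,ds$ and $\int_\R\mathcal N(s,\tau)e^{-ms}\,ds$ (finite because $\lambda/2$ and $m$ lie strictly between the relevant exponential rates—here one invokes Lemma \ref{c00} to know $\lambda<\mu_q(c)$ and to place $m$); and relative compactness from an Arzelà--Ascoli argument on compact $t$‑intervals (the integral representation gives an equicontinuity modulus independent of $\phi\in\mathcal R$) plus uniform smallness of the weighted tails, exactly as in \cite[Section 4]{gpt}. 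Finally, since $\mathcal R$ is closed, bounded and convex in the Banach space $X$, Schauder's fixed point theorem yields $\phi\in\mathcal R$ with $\mathcal A\phi=\phi$; and $\phi\le\phi^+=\delta e^{\lambda\cdot}$ means in particular $\phi(t)\le\delta e^{\lambda t}$ for all $t$, so Lemma \ref{beta2} applies and gives $\sup_{t\in\R}\phi(t)\le \sup_{t\ge0}g(t)/\inf_{t\ge0}f'(t)<\infty$, completing the proof.
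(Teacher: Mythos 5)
Your proposal follows essentially the same route as the paper: the same sub/super-solution pair $\phi^{\pm}$, the same linear majorant $L\phi(t)=\int_X C(\tau)\,d\rho(\tau)\int_\R \mathcal N(s,\tau)\phi(t-s)\,ds$ with $L\phi^+=\phi^+$ from $\chi_L(\lambda,c)=0$ and $L\phi^->\phi^-$ from $\chi_L(m,c)<0$, the same use of condition $\textbf{L}$ at the points where $\phi^-(t-s)>0$ (hence $t-s<0$ and $\phi(t-s)\le\delta$) together with nonnegativity of $g(\cdot,\tau)$ elsewhere to get the lower bound, then Arzel\`a--Ascoli plus dominated convergence for complete continuity, Schauder for the fixed point, and Lemma \ref{beta2} for the uniform bound. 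The only blemish is the displayed identity for $\mathcal A_L\phi^-$, which ignores the cutoff $\chi_{\R_-}$ on the $e^{\lambda t}$ term and so is really only the inequality $\mathcal A_L\phi^-\ge \mathcal A_L\bigl(\delta(e^{\lambda\cdot}-e^{m\cdot})\bigr)$; this is harmless (and the paper's own computation has the same looseness).
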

\begin{proof}
We start with the observation that the proof of this lemma is obtained by similar argument developed in  \cite[Lemma 13]{gpt}. Here we only give the main ideas. In fact, we first define the operator 
$$L\phi(t):=\int_XC(\tau) d\rho(\tau)\int_\R \mathcal N(s,\tau)\phi(t-s) ds.$$

We now will prove that $L\phi^+(t)=\phi^+(t)$ and $\phi^-(t)<L\phi^-(t)$ for all $t\in \R$. For this purpose, since $\chi_L(\lambda)=0$ we see  at once that
\begin{align*}
L\phi^+(t)&=\delta e^{\lambda t}\int_XC(\tau) d\rho(\tau)\int_\R \mathcal N(s,\tau)e^{-\lambda s}ds
\\&=\delta e^{\lambda t}(1-\chi_L(\lambda))=\delta e^{\lambda t}=\phi^+(t),
\end{align*} 
by (\ref{car}). Similarly, since $\chi_L(m)>0$ we also get that
\begin{align*}
L\phi^-(t)&=\delta e^{\lambda t}(1-e^{(m-\lambda)t})+\delta e^{\lambda t}\chi_L(m)\\
&>\delta e^{\lambda t}(1-e^{(m-\lambda)t})=\phi^-(t).
\end{align*} 
An analysis similar to that in the proof of \cite[Lemma 13]{gpt}, with $g'(0,\tau)$ and the root $\lambda$ of $\chi(z)$ replaced by $C(\tau)$ and  root $\lambda$ of $\chi_L(z)$, respectively, shows that $\mathcal A(\mathcal R)\subseteq \mathcal R$ and $\mathcal A$ is completely continuous map. In fact, for $\phi \in\mathcal R$
\begin{align*}
\mathcal A \phi (t)\leq\int_Xd\rho(\tau)\int_\R \mathcal N(s,\tau)C(\tau)\phi(t-s)ds=L\phi(t)\leq L\phi^+(t)=\phi^+(t).
\end{align*}
If there exists  $t_0$ such that $0<\phi^-(t_0)\leq \phi(t_0)$, we would have $t_0\in \R_-$ and $\phi(t_0)\leq \delta$, and hence, $g(\phi (t_0),\tau)=C(\tau)\phi(t_0)\geq C(\tau)\phi^-(t_0)$. In the case that $\phi^-(t_0)=0$, there would be $g(\phi (t_0),\tau)\geq C(\tau)\phi^-(t_0)=0$. Hence
\begin{align*}
\mathcal A \phi (t)\geq \int_Xd\rho(\tau)\int_\R \mathcal N(s,\tau)C(\tau)\phi^-(t-s)ds=L\phi(t)\geq L\phi^-(t)>\phi^-(t).
\end{align*}
This clearly forces
$$\phi^-(t)\leq \mathcal A \phi (t)\leq \phi^+(t), t\in \R.$$
In addition, we observe that  $\mathcal A(\mathcal R)$ is a pre-compact subset of $\mathcal R$, which follows  from
the convergence in $\mathcal R$ is uniform on compact subsets of $\R$. Furthermore, the functions of $\mathcal A(\mathcal R)$ are uniformly bounded on every compact subset of $\R$ and as we have the following estimation
\begin{align*}
|\mathcal A \phi (t+h)-\mathcal A \phi (t)|&\leq \int_X C(\tau) d\rho(\tau)\int_\R \mathcal |N(t+h-u,\tau)-N(t-u,\tau)|\phi(u)du\\
&\leq  \int_X C(\tau) d\rho(\tau)\int_\R \mathcal |N(t+s,\tau)-N(s,\tau)|\phi(t-s)ds\\
&\leq \delta e^{k\lambda} \int_X C(\tau) d\rho(\tau)\int_\R \mathcal |N(s+h,\tau)-N(s,\tau)|e^{-\lambda s}ds, 
\end{align*}
for all $t\in[-k,k]$ and $\phi\in \mathcal R$, they are also equicontinuous, by $|\mathcal A \phi (t+h)-\mathcal A \phi (t)|\to 0, \,\text{as}\,h\to 0$ uniformly on every $[-k,k]$. In this way, the compactness property of $\mathcal A$ and the dominated convergence theorem imply the continuity of $\mathcal A$.
Finally, the Shauder's fixed point theorem implies that $\mathcal A$ has at least one fixed point $\phi\in \mathcal R$. Since $\phi(t)\leq \delta e^{\lambda t}$ and $\phi$ is a positive solution of (\ref{i2}), from Lemma \ref{beta2} we have  $\sup_{t\in \R} \phi(t)<\infty$, which proves the lemma.
\end{proof}

\begin{thm}\label{t1} (Existence of semi-wavefronts) Let  assumptions {\bf{$H_1$}}-{\bf{$H_2$}} hold. Suppose further conditions (\ref{e11}) and (\ref{con1}) hold.  Then the equation (\ref{i2}) has at least one semi-wavefront $u(x,t)=\phi(x+ct)$ propagating with speed $c\geq c_\star$ such that $\phi(-\infty)=0$ and $\lim \inf_{t\to +\infty}\phi(t)>0$.
\end{thm}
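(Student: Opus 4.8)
The plan is to combine the fixed-point construction of Lemma~\ref{er} with the geometric and persistence results of the previous sections, then remove the auxiliary assumption~$\textbf{L}$ by an approximation argument. First, fix a speed $c > c_\star$. The extra hypothesis~$\textbf{L}$ is not part of the theorem, so I would begin by modifying the nonlinearities $g$ and $f_\beta$ on a small interval $[0,\delta)$: replace $g$ by a function $g_\delta$ that agrees with $g$ outside $[0,\delta)$ but equals $Ls$ on $[0,\delta)$ (and similarly linearize $f_\beta$ near the origin), keeping $H_1$–$H_2$, the bound $g_\delta(s)\le Ls$, and~\eqref{con1} intact. For the modified equation, Lemma~\ref{er} produces a fixed point $\phi_\delta\in\mathcal R$ of $\mathcal A$, which is by construction a positive solution of the integral equation~\eqref{dk1} (hence of~\eqref{i2}) with $\phi^-(t)\le\phi_\delta(t)\le\phi^+(t)$; in particular $\phi_\delta(t)\le\delta e^{\lambda t}$, so $\phi_\delta(-\infty)=0$, and by Lemma~\ref{beta2} the solution is bounded, uniformly in $\delta$, by $\sup_{t\ge0}g(t)/\inf_{t\ge0}f'(t)$.

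Next I would let $\delta\downarrow 0$. Because the bound $\delta\,e^{\lambda t}\le\phi_\delta(t)$ is lost in the limit, one cannot simply pass $\delta\to 0$ and conclude nontriviality directly; instead, I would fix a reference point (say normalize so that $\phi_\delta(0)=\xi_1/2$ for a suitable $\xi_1\in(0,\xi_2)$, using the intermediate value theorem together with $\phi_\delta^-(0)=0<\phi^+(0)=\delta$ after the appropriate translation), and then extract a subsequence converging locally uniformly — the equicontinuity estimates already established in the proof of Lemma~\ref{er}, together with the uniform $L^\infty$ bound, give compactness via Arzelà–Ascoli and a diagonal argument. The limit $\phi$ is a nonnegative bounded solution of the \emph{original} equation~\eqref{i2} (the modifications only affect an arbitrarily small neighborhood of $0$, so they disappear in the limit on any region where $\phi$ stays away from~$0$), it is nonconstant since $\phi(0)=\xi_1/2\in(0,\xi_2)$, and it vanishes somewhere along a sequence $t_n\to-\infty$ by the lower barrier. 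By Lemma~\ref{po}, $\phi$ cannot vanish at a finite point, so $\phi>0$ on $\R$, and $\phi(-\infty)=0$ by the barrier. Finally, Lemma~\ref{ga} (applicable since $\xi_1>\inf_{t\in\R}\phi(t)=0$ and all hypotheses of Lemma~\ref{Hip} hold, using that $f'$ locally bounded follows from $f\in C^1$) yields $\liminf_{t\to+\infty}\phi(t)>\xi_1>0$, which is exactly the uniform persistence claimed.

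For the critical speed $c=c_\star$ itself, I would use a second limiting argument: take speeds $c_j\downarrow c_\star$, obtain semi-wavefronts $\phi_j$ for each $c_j$ by the above, normalize them by $\phi_j(0)=\xi_1/2$, and pass to a locally uniform limit. The uniform upper bound $\sup g/\inf f'$ is independent of $c$, and the equicontinuity estimates are locally uniform in $c$, so the same Arzelà–Ascoli/diagonal extraction applies; the limit solves~\eqref{i2} with $c=c_\star$, is nonconstant, positive (Lemma~\ref{po}), and satisfies $\phi(-\infty)=0$, $\liminf_{t\to+\infty}\phi>0$ by Lemmas~\ref{ga} and~\ref{to}.

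The main obstacle I anticipate is controlling the behavior at $-\infty$ after the limits are taken: a priori a locally uniform limit of functions vanishing at $-\infty$ need only be nonnegative, and could in principle fail to decay or could degenerate to the zero function. The normalization $\phi_\cdot(0)=\xi_1/2$ rules out the trivial limit, and the dichotomy "$\phi(+\infty)=0$ or $\liminf_{t\to+\infty}\phi>0$" from \cite[Theorem~3]{gpt} (invoked via Lemmas~\ref{to} and~\ref{ga}) together with \cite[Corollary~4]{gpt} pins down the asymptotics once we know the limit is a genuine positive bounded solution with all real roots of the associated characteristic function positive — which is guaranteed precisely by the condition $c\ge c_*$ (and $c_\star\ge c_*$). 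Verifying carefully that the modified nonlinearities still satisfy every hypothesis of the cited lemmas uniformly in $\delta$, and that the two successive limits ($\delta\to0$ then $c_j\to c_\star$, or a single joint limit) can be organized without circularity, is the delicate bookkeeping that the proof must handle.
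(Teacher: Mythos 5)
Your proposal follows essentially the same route as the paper: approximate $g$ and $f_\beta$ near the origin so that hypothesis $\textbf{L}$ holds (the paper takes $g_n(s)=\max\{L/n,\,g(s)\}$ for $s\ge 1/n$, which keeps the modification continuous --- a detail your $g_\delta$ needs), apply Lemma~\ref{er} for each approximation, pass to a locally uniform limit via Arzel\`a--Ascoli and dominated convergence using the uniform bound $\sup g/\inf f'$ and the uniform persistence level $\xi_1$ from Lemmas~\ref{Hip} and~\ref{ga}, and handle $c=c_\star$ by a second limit $c_n\downarrow c_\star$. Your explicit normalization $\phi_\delta(0)=\xi_1/2$ simply makes precise the nondegeneracy step that the paper delegates to \cite[Corollary 16]{gpt}.
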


\begin{proof}
We define the  continuous functions
\begin{align}
g_n(s)=\left\{\begin{array}{cc}Ls, & s\in[0,1/n], \\\max\left\{\frac{L}{n},g(s)\right\}, & s\geq 1/n,\end{array}\right.
\end{align}
and
\begin{align}
{f_\beta}_{n}(s)=\left\{\begin{array}{cc}\Big(\beta-\inf_{s\geq 0}f'(s)\Big)s, & s\in[0,1/n], \\\max\left\{\frac{\beta-\inf_{s\geq 0}f'(s)}{n},f_\beta(s)\right\}, & s\geq 1/n.\end{array}\right.
\end{align}
Note that $g_n$ and ${f_\beta}_{n}$ satisfy the hypothesis $\textbf{ L}$  whit $\delta=\frac{1}{n}$. Moreover, observe that $g_n$ and ${f_\beta}_{n}$ converge uniformly to $g$ and $f_\beta$ on $\R_+$, respectively. If we now define
\begin{align}
g_n(s,\tau)=\left\{\begin{array}{cc}  g_n(s), & \tau=\tau_1, \\ {f_\beta}_n(s),& \tau=\tau_2,\end{array}\right. 
\end{align}
then $g_n(s,\tau)$ converge uniformly to $g(s,\tau)$ on $\R_+$ for $\tau\in \{\tau_1,\tau_2\}$ and $g_n$ satisfy the condition   $g_n(s,\tau)\leq C(\tau) s,\,s\in \R.$

We now consider the operators $\mathcal A_n: \mathcal R \to X$ defined by 
$$\mathcal A_n \phi (t):=\int_Xd\rho(\tau)\int_\R \mathcal N(s,\tau)g_n(\phi(t-s),\tau) ds.$$
By Lemma \ref{er} we have for each large $n$ the existence of a positive continuos function $\phi_n$ such that $\phi_n(-\infty)=0$ and  $$\mathcal A_n\phi_n(t)=\phi _n(t)=\int_Xd\rho(\tau)\int_\R \mathcal N(s,\tau)g_n(\phi_n(t-s),\tau) ds.$$
Consequently, we have shown that   the functions $\phi_n$ is a positive solution of  equation
\begin{equation}\label{ii5}
 y''(t) - cy'(t)-\beta y(t)+f_{\beta_n}(y(t))+  \int_0^\infty\int_{\R}K(s,w)g_n(y(t-cs-
w))dwds=0,
\end{equation}
whit speed $c>c_\star$.

Proceeding analogously to the proof of  Lemma \ref{beta2}, we can obtain  that $\phi_n$ are bounded functions  such that
$$\phi_n(t)\leq \frac{\max\{L,\sup_{u\geq 0}g(u)\} }{\inf_{t\geq 0} f'(t)}<\infty.$$
Moreover,  for all sufficiently large $n$, and with the same $\xi_1$ and $\xi_2$ given in Lemma \ref{ga} the properties of  Lemma \ref{Hip} hold. In consequence,  we have 
$$\liminf_{t\to +\infty} \phi_n(t)>\xi_1.$$

Consequently, with a similar argument to \cite[Corollary 16]{gpt}, we can prove that $\{\phi_{n}\}$ are equicontinuous on $\R$. Thus there exists a subsequence $\phi_{n_j}$ which converges uniformly on compact sets to some bounded $\phi\in C(\R,\R)$. By Lebesgue's dominated convergence theorem, $\phi$ satisfies the equation \ref{dk1} and $$\phi(-\infty)=0, \quad\liminf_{t\to +\infty}\phi(t)\geq \xi_1>0.$$
Finally, for the case $c=c_\star$, we define $c_n:=\frac{(n+1)c_\star}{n}$. Since $c_n>c_\star$, the pervious result assures the existence of positive bounded solutions $\psi_n$ to (\ref{i2}) such that $\psi_n(-\infty)=0$, $\lim \inf_{t\to +\infty}\psi_n(t)\geq \xi_1$ and 
\begin{align} \label{q+}
\psi_n(t)&=\frac{1}{\sigma(c_n)}\left(
\int_{-\infty}^te^{\nu(c_n)(t-s)}(\mathcal G\psi_n)(s)ds
+\int_t^{+\infty}e^{\mu(c_n)(t-s)}(\mathcal G\psi_n)(s)ds\right)\\\nonumber
&= \nonumber\int_\R k_1(t-s)(\mathcal G\psi_n)(s)ds,\,\, t\in \R,
\end{align}
where $$k_1(s)=(\sigma(c_n))^{-1}\left\{\begin{array}{cc}e^{\nu(c_n) s}, & s\geq 0 \\e^{\mu(c_n) s}, & s<0\end{array}\right.,$$ $\sigma(c_n)=\sqrt{c_n^2+4\beta}$,  $\nu(c_n)<0<\mu(c_n)$ are the roots of  $z^2-c_nz-\beta=0$
 and  the operator $\mathcal G$ is defined as $$(\mathcal G\psi)(t):= \int_0^\infty\int_{\R}K(s,w)g(\psi(t-c_ns-
w))dwds+f_\beta(\psi(t)),$$ 
Now, differentiating (\ref{q+}) we find that 
$$|\psi_n'(t)|\leq \frac{1}{\sigma(c_\star)}\left(\sup_{u\geq 0}g(u)+\frac{\sup_{u\geq 0}g(u)}{\inf_{s\geq 0} f'(s)}\right).$$
In consequence, $\{\psi_n\}$  is pre-compact in the compact open topology of $C(\R,\R)$ and we find a subsequence $\{\psi_{n_j}\}$ which converges uniformly on compacts to some bounded function $\psi\in C(\R,\R).$ In addition, Lebesgue's dominated convergence theorem implies that $\psi$ is a solution of (\ref{i2}) with $c=c_\star$ such that $\psi(-\infty)=0$ and $\lim \inf_{t\to +\infty}\psi(t)\geq \xi_1.$
\end{proof}
\begin{remark}
Note that  if we assume that $L=g'(0)$ and $f'(s)\geq f'(0), t\geq 0$, then  $c_*=c_\star$. In \cite{mau} we have proved that for any $c<c_*$ the equation (\ref{i2}) has no semi-wavefront solution propagating with speed $c$ vanishing at $-\infty$. 
\end{remark}

\begin{cor} (Existence of wavefronts) Assume all conditions of Theorem \ref{t1} are fulfilled.  If equation $f(s)=g(s)$ has only two solutions: $0$ and $\kappa$,  with $\kappa$ being globally attracting with respect to $f^{-1}\circ g$, then  the equation (\ref{i2}) has at least one wavefront $u(x,t)=\phi(x+ct)$ propagating with speed $c\geq c_\star$ such that $\phi(+\infty)=\kappa$.
\end{cor}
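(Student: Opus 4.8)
The plan is to verify that the semi-wavefront $\phi$ produced by Theorem \ref{t1} actually joins $0$ to $\kappa$, i.e. that $\phi(+\infty)=\kappa$. Theorem \ref{t1} already supplies a bounded positive continuous solution $\phi$ of (\ref{i2}) -- equivalently of the integral equation (\ref{dk1}) -- with $\phi(-\infty)=0$ and $\liminf_{t\to+\infty}\phi(t)>0$, so, since $0<\kappa$, only the behaviour at $+\infty$ has to be settled. Set $p:=\liminf_{t\to+\infty}\phi(t)$ and $q:=\limsup_{t\to+\infty}\phi(t)$; then $0<p\le q<\infty$ (the finite upper bound being the one obtained for $\phi_n$ in the proof of Theorem \ref{t1}), and, $\phi$ being continuous, its $\omega$-limit set is the whole interval $[p,q]$. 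It suffices to prove $p=q$: for then, letting $\ell:=\phi(+\infty)$ and passing to the limit $t\to+\infty$ in (\ref{dk1}) with $\int_\R\mathcal N(s,\tau_1)\,ds=\int_\R\mathcal N(s,\tau_2)\,ds=1/\beta$, one gets $\ell=\beta^{-1}g(\ell)+\beta^{-1}f_\beta(\ell)$, that is $f(\ell)=g(\ell)$, so $\ell\in\{0,\kappa\}$ and hence $\ell=\kappa$.

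The first substantial step is to trap $[p,q]$ between itself and its image under the one-dimensional map $G=f^{-1}\circ g$ singled out in Lemma \ref{Hip}. Given $\varepsilon>0$, choose $T$ with $\phi([T,\infty))\subseteq[p-\varepsilon,q+\varepsilon]$ and, in (\ref{dk1}), split each $\int_\R\mathcal N(s,\tau)g(\phi(t-s),\tau)\,ds$ over $\{|s|\le R\}$ and $\{|s|>R\}$. Because $\phi$ is globally bounded and $\mathcal N(\cdot,\tau)\in L^1(\R)$, the tail $\{|s|>R\}$ contributes a quantity that is $O(\int_{|s|>R}\mathcal N(s,\tau)\,ds)$ uniformly in $t$ and thus negligible as $R\to\infty$; on $\{|s|\le R\}$, as soon as $t\ge T+R$ we have $\phi(t-s)\in[p-\varepsilon,q+\varepsilon]$. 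Using that $g(\cdot,\tau_2)=f_\beta$ is increasing for the $\beta$ fixed in Lemma \ref{Hip} while $g(\cdot,\tau_1)=g$ is continuous, and taking $\limsup_{t\to+\infty}$, then $R\to\infty$, then $\varepsilon\to0$, I obtain $\beta q\le\max_{v\in[p,q]}g(v)+f_\beta(q)$, i.e. $f(q)\le\max_{[p,q]}g$; the symmetric computation with $\liminf$ gives $f(p)\ge\min_{[p,q]}g$. Since $f$ is a strictly increasing bijection of $[0,\infty)$ and $G=f^{-1}\circ g$, these two inequalities say precisely that $q\le\max_{[p,q]}G$ and $p\ge\min_{[p,q]}G$, i.e. $[p,q]\subseteq G([p,q])$.

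What remains -- and this is the main obstacle -- is to collapse $[p,q]$ to the single point $\kappa$ using that $\kappa$ is globally attracting for $G$. A cheap first consequence of $[p,q]\subseteq G([p,q])$ is that $G(x)-x$ vanishes somewhere on $[p,q]$ (by the intermediate value theorem, since $\min_{[p,q]}G\le p$ and $\max_{[p,q]}G\ge q$), so $G$ has a fixed point in $[p,q]$, which can only be $\kappa$; hence $\kappa\in[p,q]$. To exclude $p<q$ one passes to limit profiles: for each $\ell\in[p,q]$ there is, by Arzel\`a--Ascoli applied to translates $\phi(t_n+\cdot)$ with $\phi(t_n)\to\ell$, together with Lebesgue's dominated convergence theorem -- exactly as in the closing lines of the proof of Theorem \ref{t1} -- an entire bounded solution $\psi$ of (\ref{dk1}) with $\psi(\R)\subseteq[p,q]$ and $\psi(0)=\ell$. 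Taking $\ell=q$ and evaluating (\ref{dk1}) at $t=0$, the strict positivity of $\mathcal N(\cdot,\tau_1)$ and the strict monotonicity of $f_\beta$ show that, were $f(q)=\max_{[p,q]}g$, one would be forced to have $\psi\equiv q$, hence $f(q)=g(q)$ and $q=\kappa$; the genuinely delicate case $f(q)<\max_{[p,q]}g$ with $p<q$ is ruled out by iterating this estimate together with its lower-bound counterpart and using the global attractivity of $\kappa$ for $G$ to make the resulting nested intervals shrink to $\kappa$ -- this is the squeezing argument of \cite{gpt}, applicable here through Lemma \ref{Hip} (namely $G(0)=0$, $0<G(v)<\xi_2$ for $v>0$, and $G'(0)>1$). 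Consequently $p=q=\kappa$, so $\phi(+\infty)=\kappa$ and $\phi$ is the required wavefront joining $0$ to $\kappa$. The only genuinely technical issues along the way are the two-sided, non-compactly supported kernel $\mathcal N$ -- handled by the $L^1$-tail splitting above -- and this final collapse, for which the global attractivity of $\kappa$ with respect to $f^{-1}\circ g$ is exactly the hypothesis that is needed.
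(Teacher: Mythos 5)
The paper offers no proof of this corollary at all --- the convergence $\phi(+\infty)=\kappa$ is delegated wholesale to the machinery of \cite{gpt} --- so there is no ``official'' argument to match yours against; judged on its own, your proposal follows the standard route and its first two stages are sound. The reduction to showing $p=q$ (with $p=\liminf_{t\to+\infty}\phi$, $q=\limsup_{t\to+\infty}\phi$ and $0<p\le q<\infty$ by Theorem \ref{t1}), the computation $\int_\R\mathcal N(s,\tau_1)\,ds=\int_\R\mathcal N(s,\tau_2)\,ds=1/\beta$ showing that any limit value $\ell>0$ solves $f(\ell)=g(\ell)$ and hence equals $\kappa$, and the fluctuation estimates $f(q)\le\max_{[p,q]}g$, $f(p)\ge\min_{[p,q]}g$ --- obtained by the $L^1$-tail splitting and the monotonicity of $f$ and of $f_\beta$ for $\beta$ large --- are all correct, and they do give $[p,q]\subseteq G([p,q])$ for $G=f^{-1}\circ g$, as well as $\kappa\in[p,q]$ by the intermediate value theorem.

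The one place where your write-up does not actually close is the decisive step, the collapse of $[p,q]$ to $\{\kappa\}$, which is precisely where the hypothesis that $\kappa$ is globally attracting must enter. Your limit-profile detour (the entire bounded solution $\psi$ with $\psi(0)=q$) at best disposes of the boundary case $f(q)=\max_{[p,q]}g$, and for the remaining case you simply invoke ``the squeezing argument of \cite{gpt}'' without running it. The clean way to finish from what you already have: $[p,q]\subseteq G([p,q])$ yields, for every $x\in[p,q]$ and every $n$, some $x_n\in[p,q]$ with $G^n(x_n)=x$; since $[p,q]$ is a compact subset of $(0,+\infty)$ and a globally attracting fixed point of a continuous interval map attracts such compacta uniformly (the standard one-dimensional stability fact used in \cite{gpt} and \cite{tat}), we get $x=\lim_{n\to\infty}G^n(x_n)=\kappa$, hence $[p,q]=\{\kappa\}$. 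That uniform-attraction lemma is the only nontrivial dynamical ingredient and should be cited or proved rather than gestured at; with it supplied, your argument is complete and is, as far as one can tell, the proof the authors intend.
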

\begin{remark}Sufficient conditions to ensure the global stability  of $f^{-1}\circ g$ are given in \cite{tat}.
\end{remark}

 \section {Applications.}
 In this section, we apply Theorem \ref{2}  to some  non-local  reaction-diffusion  epidemic and population models  with distributed time  delay, studied in \cite{AF,FWZ,gouk,OG,tz,wlr2,wl2,wl3,xz}.   

  \vspace{0,3cm}
 {\bf{ An application to  the epidemic dynamics:}} Consider the following reaction-diffusion model with distributed delay
\begin{equation}\label{sy4}
\left\{\begin{array}{c}u_t(t,x) =  du_{xx}(t,x)- f(u(t,x)) + \int_{\R}K(x-y)v(t,y)dy \\\\
v_t(t,x) =  - \alpha v(t,x)+\int_0^\infty g(u(t-s,x))P(ds),  \quad\quad\quad\quad\end{array}\right.
\end{equation}
where $\alpha, d>0$, $x\in \R, t\geq 0$, and $P$ is a probability measure on $\R_+$. The functions $u(t,x)$ and $v(t,x)$ denote the  densities of the infectious agent and the infective human population at a point $x$ in the habitat at time $t$, respectively (see \cite{tz,wl2,wl3,xz}). Note that system (\ref{sy4}) can be seen as a generalization of the systems studied  in the cited works. However, here the nonnegative  kernel $K$ can be asymmetric and normalized by $\int_\R K(w)dw=1$,  and the function $g$ can be non-monotone. By scaling the variables, we can suppose that $d=1$.

Now, suppose that $(u(t,x),v(t,x)) =(\phi(x+ct),\psi(x+ct))$ is a semi-wavefront solution of system (\ref{sy4}) with speed $c$, i.e. the continuous non-constant  uniformly bounded functions $u(t,x)=\phi(x+ct)$  and $v(t,x)=\psi(x+ct)$ are positives  and  satisfy the condition $\phi(-\infty)=\psi(-\infty)=0$.  Then  the wave profiles $\phi$ and $\psi$ must satisfy the following system:
\begin{equation}\label{sy5}
\left\{\begin{array}{c}
\phi''(t)-c\phi'(t)- f(\phi(t)) + \int_{\R}K(u)\psi(t-u)du=0 \\\\
c\psi'(t) +\alpha \psi(t)-\int_0^\infty g(\phi(t-cs))P(ds)=0.  \quad\quad\end{array}\right.
\end{equation}
As it was obtained in \cite{mau},  $\psi$ satisfies 
\begin{align}\label{si1}
\psi(t)
&=\int_0^\infty g(\phi(t-cw))K_2(w) dw,\,\, c\not=0,
\end{align} 
where
$$K_2(w)=\int_0^w e^{-\alpha(w-r)} P(dr),$$
and  if $c=0$, then $\alpha \psi(t)=g(\phi(t))$. In consequence,  $\phi(t)$  should satisfy the  integral equation
\begin{align}  \label{si2}
\phi(t)&=\frac{1}{\sigma(c)}\left(
\int_{-\infty}^te^{\nu(c)(t-s)}(\mathcal G\phi)(s)ds
+\int_t^{+\infty}e^{\mu(c)(t-s)}(\mathcal G\phi)(s)ds\right)\nonumber\\
&= \int_\R k_1(t-s)(\mathcal G\phi)(s)ds,
\end{align}
where $$k_1(s)=(\sigma(c))^{-1}\left\{\begin{array}{cc}e^{\nu(c) s}, & s\geq 0 \\e^{\mu(c) s}, & s<0\end{array}\right.,$$ $\sigma(c)=\sqrt{c^2+4\beta}$,  $\nu(c)<0<\mu(c)$ are the roots of  $z^2-cz-\beta=0$
 and  the operator $\mathcal G$ is defined as $$(\mathcal G\phi)(t):= \int_{\R}K(u)\psi(t-u)du+f_\beta(\phi(t)),\quad f_\beta(s)=\beta s-f(s), \beta>f'(0).$$ 
 Thus,   the profile $\phi$ also must satisfy the equation
\begin{align}\label{si3}
 \phi(t)&=(k_1*k_2)*g(\phi)(t)+ k_1* f_\beta(\phi)(t).
 \end{align}
A similar argument can be applied when $c=0$.

Conversely, if $\phi$ is a semi-wavefront solution of (\ref{si2}) propagating with speed $c$,  then  the function $\psi$ defined by (\ref{si2}) is well defined and $\psi(-\infty)=0$, by the Lebesgue's dominated convergence theorem. Moreover, we get that
$$|\psi(t)|\leq \frac{\sup_{u\geq 0}g(u)}{\alpha}, t\in \R.$$
Since the process developed in \cite{mau} to obtain (\ref{si1}) and (\ref{si2}) is invertible, it follows that $(\phi(t),\psi(t))$ is a semi-wavefront solution to (\ref{sy5}) propagating with speed $c$. In consequence we have the following theorem.
 \begin{lem} The following affirmations are true.
\begin{enumerate}
\item If $(u(t,x),v(t,x)) =(\phi(x+ct),\psi(x+ct))$ is  a semi-wavefront  of (\ref{sy4}) with speed $c$, then  $(\phi(t),\psi(t))$ is a semi-wavefront of  (\ref{sy5}) with speed $c$.
\item Let $\phi(t)$ be  a semi-wavefront  of  (\ref{si3}) with speed $c$.  If $\psi(t)$ is defined by (\ref{si1}), then  $(\phi(t),\psi(t))$ is a semi-wavefront of  (\ref{sy5}) with speed $c$ and $(u(t,x),v(t,x)) =(\phi(x+ct),\psi(x+ct))$ is also a semi-wavefront of (\ref{sy4}).
\end{enumerate}
\end{lem}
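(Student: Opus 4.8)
The strategy is to combine the elementary substitution of the travelling-wave ansatz into the PDE system with the reversibility of the chain of transformations leading from (\ref{sy5}) to (\ref{si1}), (\ref{si2}) and (\ref{si3}), already carried out above and in \cite{mau}. For part 1, I would insert $u(t,x)=\phi(x+ct)$ and $v(t,x)=\psi(x+ct)$ into (\ref{sy4}) and use the chain rule, so that $u_t=c\phi'$, $u_{xx}=\phi''$ and $v_t=c\psi'$; in the nonlocal terms the change of variable $y=x-u$ turns $\int_\R K(x-y)v(t,y)\,dy$ into $\int_\R K(u)\psi(x+ct-u)\,du$, while $\int_0^\infty g(u(t-s,x))P(ds)$ becomes $\int_0^\infty g(\phi(x+ct-cs))P(ds)$. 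Reading these identities in the single variable $\xi=x+ct$ gives exactly (\ref{sy5}), and since $(u,v)$ is a semi-wavefront of (\ref{sy4}) the profiles $\phi,\psi$ are, by definition, continuous, uniformly bounded, non-constant, positive and vanish at $-\infty$, so $(\phi,\psi)$ is a semi-wavefront of (\ref{sy5}).

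For part 2, let $\phi$ be a semi-wavefront of (\ref{si3}) with speed $c$ and let $\psi$ be defined by (\ref{si1}) (resp. by $\alpha\psi=g(\phi)$ when $c=0$). First I would establish the qualitative properties of $\psi$. Boundedness of $\phi$ together with hypothesis $H_2$ gives $0\le g(\phi(\cdot))\le\sup_{u\ge 0}g(u)<\infty$, and Fubini applied to $K_2(w)=\int_0^w e^{-\alpha(w-r)}P(dr)$ yields $\int_0^\infty K_2(w)\,dw=\alpha^{-1}$; hence the integral in (\ref{si1}) converges and $|\psi(t)|\le\alpha^{-1}\sup_{u\ge 0}g(u)$ for all $t$. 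Positivity of $\psi$ follows from $\phi>0$, $g>0$ on $(0,+\infty)$ and $K_2\ge 0$ with $K_2\not\equiv 0$. The limit $\psi(-\infty)=0$ is obtained from the dominated convergence theorem: for fixed $w\ge 0$ and $c\ne 0$ one has $\phi(t-cw)\to 0$ as $t\to-\infty$, with integrable dominating function $\sup_{u\ge 0}g(u)\,K_2(w)$ (the case $c=0$ being immediate from continuity of $g$ and $\phi(-\infty)=0$). Finally $\psi$ is non-constant, since otherwise $\psi(-\infty)=0$ would force $\psi\equiv 0$ and hence $\int_0^\infty g(\phi(t-cs))P(ds)\equiv 0$, contradicting $g(\phi(\cdot))>0$.

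Next I would verify that $(\phi,\psi)$ solves (\ref{sy5}). Substituting (\ref{si1}) into $(\mathcal G\phi)(t)=\int_\R K(u)\psi(t-u)\,du+f_\beta(\phi(t))$ and interchanging the order of integration (combining the two shifts $u$ and $cw$) identifies $\int_\R K(u)\psi(t-u)\,du$ with $(k_2*g(\phi))(t)$ for the kernel $k_2$ occurring in (\ref{si3}); therefore (\ref{si3}) becomes $\phi=(k_1*k_2)*g(\phi)+k_1*f_\beta(\phi)=k_1*(\mathcal G\phi)$, i.e. (\ref{si2}). Since $k_1$ is the Green function of $\frac{d^2}{dt^2}-c\frac{d}{dt}-\beta$ on bounded functions, this is equivalent to $\phi''-c\phi'-\beta\phi+(\mathcal G\phi)=0$, which by the definition of $f_\beta$ and $\mathcal G$ is precisely the first equation of (\ref{sy5}) (and also forces $\phi\in C^2(\R)$ by bootstrap). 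For the second equation I would insert $K_2(w)=\int_0^w e^{-\alpha(w-r)}P(dr)$ into (\ref{si1}), apply Fubini on $\{0\le r\le w\}$ and substitute $w=r+\zeta$ to obtain $\psi(t)=\int_0^\infty\Phi(t-cr)\,P(dr)$ with $\Phi(s)=\int_0^\infty g(\phi(s-c\zeta))e^{-\alpha\zeta}\,d\zeta$; a standard variation-of-parameters computation gives $\Phi\in C^1(\R)$ and $c\Phi'+\alpha\Phi=g(\phi)$, whence $c\psi'+\alpha\psi=\int_0^\infty g(\phi(t-cr))P(dr)$ (the case $c=0$ being trivial from $\alpha\psi=g(\phi)$ and $P(\R_+)=1$). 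Thus $(\phi,\psi)$ is a semi-wavefront of (\ref{sy5}), and reversing the substitution of part 1 — now with $\phi\in C^2$, $\psi\in C^1$ — shows that $(u(t,x),v(t,x))=(\phi(x+ct),\psi(x+ct))$ solves (\ref{sy4}) classically; it is a semi-wavefront there because of the boundary, positivity, boundedness and non-constancy properties already established.

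I expect the main obstacle to be the verification, inside part 2, that the function $\psi$ furnished by (\ref{si1}) genuinely satisfies the transport-type equation in (\ref{sy5}): this requires handling the general probability measure $P$ via Fubini (rather than a density), recognizing $K_2$ as the resolvent kernel of $\frac{d}{dw}+\alpha$, and treating the degenerate case $c=0$ separately. Everything else — the chain-rule substitutions, the Green-function characterization of (\ref{si2}), and the regularity bootstrap passing from the integral equations to classical solutions — is routine.
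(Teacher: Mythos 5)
Your proposal is correct and follows essentially the same route as the paper, which establishes the lemma through the derivation preceding its statement: the chain-rule substitution of the wave ansatz for part 1, and for part 2 the bound $|\psi|\le \alpha^{-1}\sup_{u\ge 0}g(u)$, the dominated-convergence argument for $\psi(-\infty)=0$, and the invertibility of the transformations leading to (\ref{si1})--(\ref{si3}) (which the paper delegates to \cite{mau} and you carry out explicitly via the resolvent-kernel computation for $K_2$). No gaps; you simply supply details the paper leaves to the cited reference.
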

Next,  the characteristic function $\chi$ becomes: 
 \begin{align}\label{ppp}\nonumber
\chi(z)&
=-\frac{z^2-cz-f'(0)+\frac{g'(0)}{cz+\alpha}\int_0^\infty e^{-zcr}P(dr)\int_\R K(w)e^{-zw}dw}{\beta+cz-z^2}
\end{align}
when  $cz+\alpha>0$. Consequently,  from \cite{mau}, we obtain
\begin{equation*}\label{c1}
\chi_0(z,c)=z^2-cz-f'(0)+\frac{g'(0)}{cz+\alpha}\int_0^\infty e^{-zcr}P(dr)\int_\R K(w)e^{-zw}dw,
\end{equation*}
and 
\begin{equation*}\label{c2}
\chi_L(z,c)=z^2-cz-\inf_{s\geq 0}f'(s)+\frac{L}{cz+\alpha}\int_0^\infty e^{-zcr}P(dr)\int_\R K(w)e^{-zw}dw. 
\end{equation*}
In this way, let $c_*$ and $c_\star$ be the minimal value of $c$ for which $\chi_0(z,c)=0$ and $\chi_L(z,c)=0$ have at least one positive root,  respectively. Then  we can now formulate the following result:
\begin{thm}\label{apli1}
Let  assumptions {\bf{$H_0$}}-{\bf{$H_2$}} hold.   If $g(s)\leq Ls$ and $f(s)\geq inf_{s\geq 0}f'(s)s,s>0$, then the 
 system (\ref{sy4}) admits at least one semi-wavefront solution $$(u(t,x),v(t,x))=(\phi(x+ct),\psi(x+ct)), \,\, \phi(-\infty)=\psi(-\infty)=0,$$   for each admissible wave speed $c\geq c_\star$. If $inf_{s\geq 0}f'(s)=f'(0)$, then the existence holds for each $c\geq c_*$.
Furthermore, the system (\ref{sy4}) has no  semi-wavefront solution propagating with speed $c<c_*$. \end{thm}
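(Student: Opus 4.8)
\textbf{Proof proposal for Theorem \ref{apli1}.}

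The plan is to reduce the existence of a semi-wavefront for the system (\ref{sy4}) to the scalar existence theory already developed for equation (\ref{i2}), and then to invoke Theorem \ref{t1} together with Lemma~\ref{ga} (or rather the equivalence lemma stated just before this theorem). First I would verify that the scalar equation (\ref{si3}) obtained by eliminating $\psi$ via (\ref{si1}) is precisely of the form (\ref{i2})--(\ref{dk1}): the kernel $k_2$ here is the pushforward $K_2(w)=\int_0^w e^{-\alpha(w-r)}P(dr)$ (convolved with $K$), so that $(k_1*k_2)$ and $k_1$ play the roles of $\mathcal N(\cdot,\tau_1)$ and $\mathcal N(\cdot,\tau_2)$ as in Section~2, and the characteristic functions $\chi_0(z,c)$, $\chi_L(z,c)$ displayed just above the theorem are exactly the ones arising from this kernel. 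Condition \textbf{$H_0$} must be checked for the effective kernel, i.e.\ one needs the finiteness of $\int_0^\infty e^{-zcr}P(dr)\int_\R K(w)e^{-zw}dw$ on some maximal interval $[0,\gamma^\#)$; this holds by hypothesis on $K$ and since $P$ is a probability measure, so the Laplace transform is monotone and the maximal interval $\delta(c)$ is well defined, which is exactly what Lemma~\ref{c00} requires.

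Next I would apply Theorem~\ref{t1} to the scalar equation (\ref{si3}): conditions \textbf{$H_1$}--\textbf{$H_2$} are assumed, the hypothesis $g(s)\le Ls$ is (\ref{e11}), and $f(s)\ge \inf_{s\ge0}f'(s)\,s$ is (\ref{con1}); hence for every $c\ge c_\star$ there is a positive bounded $\phi$ solving (\ref{si3}) with $\phi(-\infty)=0$ and $\liminf_{t\to+\infty}\phi(t)>0$. Then I would define $\psi$ by (\ref{si1}) (or by $\alpha\psi=g(\phi)$ when $c=0$) and use the remarks in the text: $\psi$ is well defined, bounded by $\sup_{u\ge0}g(u)/\alpha$, $\psi(-\infty)=0$ by dominated convergence, and since the reduction from (\ref{sy5}) to (\ref{si3}) is invertible, the pair $(\phi,\psi)$ solves (\ref{sy5}), so $(\phi(x+ct),\psi(x+ct))$ is a semi-wavefront of (\ref{sy4}) with speed $c$. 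The statement that for $\inf_{s\ge0}f'(s)=f'(0)$ the existence holds down to $c\ge c_*$ follows because in that case $\chi_L$ with $L=g'(0)$ coincides with $\chi_0$, giving $c_\star=c_*$ (cf.\ the Remark after Theorem~\ref{t1}); here one should take $L=g'(0)$ and observe $g(s)\le g'(0)s$ is not needed beyond a neighbourhood of $0$ in the persistence argument.

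For the non-existence claim when $c<c_*$, I would argue by contradiction: a semi-wavefront of (\ref{sy4}) with speed $c<c_*$ would, by the equivalence lemma, yield a positive bounded solution $\phi$ of the scalar equation (\ref{i2})/(\ref{si3}) vanishing at $-\infty$, contradicting the non-existence result for the scalar equation cited from \cite{mau} (the Remark after Theorem~\ref{t1} records precisely this). The main obstacle I anticipate is the careful bookkeeping in the reduction step when $c$ is allowed to be zero or negative: the representation (\ref{si1}) and the sign of $cz+\alpha$ in the formula for $\chi$ require case analysis, and one must make sure the integral operator $k_1*k_2$ is well defined and that the kernel $K_2$ inherits the exponential-integrability in \textbf{$H_0$}. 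Once that reduction is in place, the rest is a direct citation of Theorem~\ref{t1}, Lemma~\ref{ga}, and the non-existence statement, so no new estimates are needed.
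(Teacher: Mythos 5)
Your proposal follows essentially the same route as the paper: the paper's argument for Theorem \ref{apli1} is precisely the reduction of the system (\ref{sy4}) to the scalar integral equation (\ref{si3}) via (\ref{si1}), the equivalence lemma for semi-wavefronts, the identification of the effective characteristic functions $\chi_0(z,c)$ and $\chi_L(z,c)$, an application of Theorem \ref{t1} (equivalently Theorem \ref{2}) under (\ref{e11}) and (\ref{con1}), and the citation of \cite{mau} for the non-existence below $c_*$. Your additional remarks on checking \textbf{$H_0$} for the effective kernel $K_2$, on the case $c=0$, and on taking $L=g'(0)$ when $\inf_{s\geq 0}f'(s)=f'(0)$ are consistent with (and slightly more careful than) the paper's treatment.
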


\begin{remark}
Theorem \ref{apli1} completes or  improves  some results  of \cite{tz,wl2,wl3,xz}. In fact, in these references the monotone case was studied, except \cite{wl3}.   It should be noted that in \cite{wl2,xz}, isotropic kernels were  considered.
\end{remark}

 \vspace{0,3cm}
{\bf{An application to the population dynamics:}}  Let $u$ and $v$ denote the numbers of mature and immature population of a single species at time $t\geq 0$, respectively. We will study the system 
\begin{equation}\label{sy3}
\left\{\begin{array}{c}u_t(t,x) =  du_{xx}(t,x)- f(u(t,x)) + \int_0^\infty\int_{\R}K(s,w)g(u(t-s,x-w))dwds\quad\quad\quad\quad \\\\
v_t(t,x) =  Dv_{xx}(t,x)- \gamma v(t,x)+g(u(t,x)) -\int_0^\infty\int_{\R}K(s,w)g(u(t-s,x-w))dwds,\end{array}\right.
\end{equation}
where $\gamma, D,d>0$  and  the nonnegative kernel $K$ can be asymmetric. Note that by scaling the variables, we can suppose that $d=1$. 
 Now, observe that in the system (\ref{sy3}) the first equation can be solved independently of the second. In this way,  if the 
 system (\ref{sy3}) admits   a semi-wavefront solution $$(u(t,x),v(t,x))=(\phi(x+ct),\psi(x+ct)),\,\,\phi(-\infty)=\psi(-\infty)=0,$$  with speed $c$, then $v(t,x)=\psi(x+ct)$ must satisfy the immature equation 
 \begin{align*}
D\psi''(t)-c\psi'(t)- \gamma \psi(t)+(\mathcal H\phi)(t)=0,
\end{align*}
where the operator $\mathcal H$ is defined by
 $$(\mathcal H\phi)(t)=g(\phi(t))-\int_0^\infty\int_{\R}K(s,w)g(\phi(t-cs-w))dwds.$$ 
 If $\phi$ is bounded, we get that $\psi$ can be represented  by 
\begin{align*}
\psi(t)=\int_\R k_1(t-s)(\mathcal H\phi)(s)ds=\int_\R k_1(s)(\mathcal H\phi)(t-s)ds,
\end{align*}
where $$k_1(s)=\left(\sqrt{c^2+4D\gamma}\right)^{-1}\left\{\begin{array}{cc}e^{\tilde\nu(c) s}, & s\geq 0 \\e^{\tilde\mu(c) s}, & s<0\end{array}\right.$$
 and $\tilde\nu(c)<0<\tilde\mu(c)$ are the roots of  $Dz^2-cz-\gamma=0$. In addition, $\mathcal H\in C(\R_+,\R_+)$ and $\mathcal H(0)=0$. Now, if $\phi(-\infty)=0$, then we have $\mathcal H(\phi(-\infty))=0$, and in consequence, the Lebesgue's theorem of dominated convergence implies that $\psi(-\infty)=0$.   Thus we obtain the following lemma.
 
 \begin{lem}
 The second equation of the system (\ref{sy3})  has a semi-wavefront $v(t,x))=\psi(x+ct)$ with $\psi(-\infty)=0$ when $u(t,x)=\phi(x+ct)$ is a semi-wavefront of the first equation of the system (\ref{sy3}).
 \end{lem}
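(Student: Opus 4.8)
The plan is to follow the computation already laid out in the paragraph preceding the statement and to supply the three points it leaves implicit --- boundedness, vanishing at $-\infty$, and positivity/non-constancy of the immature profile. First I would note that, after the scaling $d=1$, the first equation of (\ref{sy3}) is exactly (\ref{i1}) and does not involve $v$, so a semi-wavefront $u(t,x)=\phi(x+ct)$ of it furnishes a \emph{prescribed} bounded, positive, continuous, non-constant function $\phi$ with $\phi(-\infty)=0$. Inserting $v(t,x)=\psi(x+ct)$ and $u(t,x)=\phi(x+ct)$ into the second equation of (\ref{sy3}) reduces it to the scalar linear ODE $D\psi''(t)-c\psi'(t)-\gamma\psi(t)+(\mathcal H\phi)(t)=0$, $t\in\R$, with $(\mathcal H\phi)(t)=g(\phi(t))-\int_0^\infty\int_{\R}K(s,w)g(\phi(t-cs-w))\,dw\,ds$, exactly as written there. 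Since $\phi$ is bounded and $g$ is continuous, $g\circ\phi$ is bounded on the compact range of $\phi$; then $H_0$ together with dominated convergence (continuity of translation against the finite measure $K(s,w)\,dw\,ds$) shows that $\mathcal H\phi$ is bounded and continuous on $\R$, and $\mathcal H$ sends every constant profile to $0$ because $\int_0^\infty\int_{\R}K(s,w)\,dw\,ds=1$; in particular $\mathcal H(0)=0$.

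I would then solve the ODE by convolution with its Green's function. The polynomial $Dz^2-cz-\gamma$ has one negative and one positive root, $\tilde\nu(c)<0<\tilde\mu(c)$, for every $c\in\R$ (their product equals $-\gamma/D<0$), so the two-sided kernel $k_1$ written out in the text lies in $L^1(\R)$, is strictly positive, and satisfies $Dk_1''-ck_1'-\gamma k_1=-\delta_0$ in the distributional sense. Hence $\psi:=k_1*(\mathcal H\phi)$ is $C^2$, is the unique bounded solution of the profile ODE, and satisfies $\|\psi\|_\infty\le\|k_1\|_{L^1}\|\mathcal H\phi\|_\infty$; differentiating twice under the integral confirms that $v(t,x):=\psi(x+ct)$ solves the second equation of (\ref{sy3}) classically. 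For the boundary condition I would write $\psi(t)=\int_\R k_1(s)(\mathcal H\phi)(t-s)\,ds$ and let $t\to-\infty$: every argument entering $(\mathcal H\phi)(t-s)$ runs to $-\infty$, so $(\mathcal H\phi)(t-s)\to\mathcal H(\phi(-\infty))=\mathcal H(0)=0$ for each fixed $s$, and since $|(\mathcal H\phi)(t-s)|\le\|\mathcal H\phi\|_\infty$ with $k_1\in L^1(\R)$, Lebesgue's dominated convergence gives $\psi(-\infty)=0$.

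The step I expect to be the real obstacle is the positivity (and non-triviality) of $\psi$: upon writing $\psi=(k_1-k_1*k_2)*(g\circ\phi)$ with $k_2(r)=\int_0^\infty K(s,r-cs)\,ds$ a probability density, the convolution kernel $k_1-k_1*k_2$ has total mass zero and need not be sign-definite, so $\psi\ge0$ is not formal. I would resolve it by exploiting the structural form of $K$ that is present in the applications, namely that $K(s,w)$ is built from the immature diffusion kernel, the immature mortality factor, and the maturation-time law; with $K$ of that form the immature density admits the manifestly non-negative representation $v(t,x)=\int_0^\infty\int_{\R}\widetilde K(s,w)\,g(u(t-s,x-w))\,dw\,ds$ with $\widetilde K\ge0$ and $\widetilde K\in L^1$, hence under the wave ansatz $\psi(t)=\int_0^\infty\int_{\R}\widetilde K(s,w)\,g(\phi(t-cs-w))\,dw\,ds$, which is $\ge0$, bounded (because $g\circ\phi$ is), and in fact strictly positive everywhere --- if it vanished at a point, the representation would force $g\circ\phi\equiv0$ and hence $\phi\equiv0$, contradicting that $\phi$ is a nontrivial wave. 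Non-constancy of $\psi$ follows likewise: a constant $\psi$ would be identically $0$ by $\psi(-\infty)=0$, forcing $\mathcal H\phi\equiv0$, i.e.\ $g\circ\phi$ invariant under convolution with the probability density $k_2$; iterating that identity and using $g(\phi(-\infty))=0$ together with boundedness (a Choquet--Deny / random-walk argument) forces $g\circ\phi\equiv0$, again impossible. Assembling the pieces, $v(t,x)=\psi(x+ct)$ is a bounded, positive, continuous, non-constant solution of the second equation of (\ref{sy3}) with $\psi(-\infty)=0$ --- that is, a semi-wavefront --- which is the assertion.
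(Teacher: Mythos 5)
Your core computation coincides with the paper's own proof, which is simply the paragraph preceding the lemma: represent $\psi$ as $k_1*(\mathcal H\phi)$ with $k_1$ the positive $L^1$ Green's function of $Dz^2-cz-\gamma$, bound $\|\psi\|_\infty$ by $\|k_1\|_{L^1}\|\mathcal H\phi\|_\infty$, and deduce $\psi(-\infty)=0$ from $\mathcal H(\phi(-\infty))=\mathcal H(0)=0$ together with dominated convergence. Where you go beyond the paper is on positivity and non-constancy of $\psi$. The paper disposes of positivity with the bare assertion ``$\mathcal H\in C(\R_+,\R_+)$'', i.e.\ that $(\mathcal H\phi)(t)\ge 0$; you are right that this is not formal: writing $(\mathcal H\phi)(t)=g(\phi(t))-\int_\R k_2(r)\,g(\phi(t-r))\,dr$ with $k_2$ a probability density, the quantity compares $g\circ\phi$ at a point with one of its averages and can be negative, and correspondingly the kernel $k_1-k_1*k_2$ has total mass $\tfrac{1}{\gamma}(1-1)=0$ and so changes sign. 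Your repair --- representing $v$ directly as $\int_0^\infty\int_\R\widetilde K(s,w)\,g(u(t-s,x-w))\,dw\,ds$ with $\widetilde K\ge 0$ --- is the biologically natural one, but it invokes structural information about $K$ (that it is the terminal value of a nonnegative immature survival/dispersal kernel) which is not contained in $H_0$; for an abstract probability kernel $K$ the nonnegativity of $\psi$ can genuinely fail, so the lemma's claim that $\psi$ is a \emph{semi-wavefront} (hence positive) is only justified under such additional structure. The same goes for your Choquet--Deny argument for non-constancy, which the paper does not supply at all. In short, your proof contains the paper's argument and, in addition, correctly identifies a positivity gap that the paper glosses over, closing it only under an extra (though application-relevant) hypothesis on $K$.
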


Finally, consider  the characteristic functions $\chi_0(z,c)$ and $\chi_L(z,c)$  associated with the mature equation of system (\ref{sy3})
 and  $c_*,c_\star$  defined in Section \ref{into}. Then the following theorem  is a direct consequence of   Theorem \ref{2}. 

\begin{thm}\label{un}
Let  assumptions   {\bf{$H_1$}}-{\bf{$H_2$}} hold.  If $g(s)\leq Ls$ and $f(s)\geq inf_{s\geq 0}f'(s)s,s>0$, then the 
 system (\ref{sy3}) admits   at least one   semi-wavefront solution $$(u(t,x),v(t,x))=(\phi(x+ct),\psi(x+ct)),\,\,\phi(-\infty)=\psi(-\infty)=0,$$  for each admissible wave speed $c\geq c_\star$. If $inf_{s\geq 0}f'(s)=f'(0)$, then the existence holds for each $c\geq c_*$. Furthermore,  the system (\ref{sy3}) has no  semi-wavefront solution propagating with speed $c<c_*$. 
\end{thm}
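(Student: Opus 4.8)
The theorem is stated as a direct consequence of Theorem \ref{2}, so the proof is essentially a reduction. The key structural observation is that the mature equation in (\ref{sy3}) can be solved independently of the immature one and, after the normalization $d=1$, is word-for-word equation (\ref{i1}). First I would check that the hypotheses imposed here — $H_1$, $H_2$, the bound $g(s)\le Ls$ and the inequality $f(s)\ge\inf_{s\ge0}f'(s)s$, together with the standing kernel condition $H_0$ — are exactly the hypotheses (\ref{e11}) and (\ref{con1}) required by Theorem \ref{t1} (equivalently the first part of Theorem \ref{2}). Applying that theorem to the mature equation then gives, for every admissible speed $c\ge c_\star$, a semi-wavefront $u(t,x)=\phi(x+ct)$ of that equation with $\phi(-\infty)=0$ and $\liminf_{t\to+\infty}\phi(t)>0$.

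Second, I would attach the immature profile. With $\phi$ as above, which is in particular a bounded semi-wavefront of the first equation of (\ref{sy3}), the Lemma immediately preceding the statement produces a semi-wavefront $v(t,x)=\psi(x+ct)$ of the second equation with $\psi(-\infty)=0$; concretely $\psi(t)=\int_\R k_1(s)(\mathcal H\phi)(t-s)\,ds$, and the properties $\mathcal H\in C(\R_+,\R_+)$, $\mathcal H(0)=0$ recorded before the statement, together with $\phi(-\infty)=0$ and the Lebesgue dominated convergence theorem, give $\psi(-\infty)=0$, while $\psi$ is bounded, positive, continuous and non-constant. Hence $(u(t,x),v(t,x))=(\phi(x+ct),\psi(x+ct))$ is a semi-wavefront of (\ref{sy3}) with $\phi(-\infty)=\psi(-\infty)=0$, which proves the first assertion for all $c\ge c_\star$.

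Third, for the improved speed range I would use the Remark following Theorem \ref{t1}: when $\inf_{s\ge0}f'(s)=f'(0)$ one has $c_\star=c_*$, so the two previous steps already yield a semi-wavefront for every $c\ge c_*$. For the non-existence part, suppose toward a contradiction that (\ref{sy3}) had a semi-wavefront with speed $c<c_*$; its first component $\phi$ would be a positive bounded solution of the profile equation (\ref{i2}) with $\phi(-\infty)=0$, contradicting the non-existence result of \cite{mau} quoted in the same Remark.

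Since every component of the argument is already in hand, I do not expect a genuine obstacle. The one point that merits a sentence of care is that this non-existence statement, proved in \cite{mau} for the scalar profile equation (\ref{i2}), does transfer to the coupled system (\ref{sy3}); this is harmless precisely because the coupling in (\ref{sy3}) is one-directional, so any semi-wavefront of the system restricts to a semi-wavefront of the mature equation traveling at the same speed.
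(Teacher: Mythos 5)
Your proof is correct and follows exactly the route the paper intends: the paper states Theorem \ref{un} as a direct consequence of Theorem \ref{2} applied to the decoupled mature equation (which is precisely (\ref{i1}) after normalizing $d=1$), with the immature profile $\psi$ supplied by the lemma immediately preceding the statement, and your reduction fills in precisely those steps, including the observation that non-existence for $c<c_*$ transfers to the system because the coupling is one-directional. No gaps.
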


\begin{remark}
We note that Theorem \ref{un}  completes or improves some results  of  \cite{FWZ, gouk,tz,wlr2}, where the non-existence or the uniqueness  was established under assumptions  that $K$ is Gaussian or symmetric kernel,  and $g$ monotone.  In \cite{gouk,wlr2} only  the particular cases $f(s)=\beta s^2$ and $g(s)=s$, were studied, and in \cite{tz}, the assumptions were either $f(s)=f'(0)$ or $g(s)=g'(0)s$. 
\end{remark}

\section*{Acknowledgements}  This work was supported by FONDECYT/INICIACION/ Project  11121457.

\end{document}